\documentclass[11pt,twoside]{amsart}
\usepackage[latin1]{inputenc}
\usepackage[T1]{fontenc}
\usepackage{mathtools}
\usepackage{graphicx}
\usepackage{tikz}
\usetikzlibrary{chains}
\usepackage{tcolorbox}

\PassOptionsToPackage{pdfusetitle,pagebackref,colorlinks}{hyperref}
\usepackage{bookmark}
\hypersetup{
  linkcolor={red!70!black},
  citecolor={green!70!black},
  urlcolor={blue!80!black}
}

\textwidth=450pt 
\oddsidemargin=12pt
\evensidemargin=12pt

\setlength{\footskip}{25pt}

\usepackage[latin1]{inputenc}
\usepackage{amsmath}
\usepackage{amsthm}
\usepackage{amssymb}

\usepackage[all]{xy}
\usepackage{hyperref}
\newtheorem{thm}{Theorem}[section]

\newtheorem{prop}[thm]{Proposition}

\newtheorem{lem}[thm]{Lemma}
\newtheorem{cor}[thm]{Corollary}
\newtheorem{conj}[thm]{Conjecture}

\numberwithin{equation}{section}

\theoremstyle{definition}

\newtheorem{remark}[thm]{Remark}
\newtheorem{ex}[thm]{Example}
\usepackage[OT2,T1]{fontenc}
\DeclareSymbolFont{cyrletters}{OT2}{wncyr}{m}{n}
\DeclareMathSymbol{\Sha}{\mathalpha}{cyrletters}{"58}

\newcommand{\Aut}{{\rm Aut}}

\newcommand{\Pic}{{\rm Pic}}

\newcommand{\Hom}{{\rm Hom}}

\newcommand{\coker}{{\rm coker}}
\newcommand{\cal}{\mathcal}

\newcommand{\kc}{{\cal C}}

\newcommand{\ki}{{\cal I}}
\newcommand{\kk}{{\cal K}}

\newcommand{\ko}{{\cal O}}

\newcommand{\kt}{{\cal T}}

\newcommand{\ZZ}{\mathbb{Z}}
\newcommand{\QQ}{\mathbb{Q}}

\newcommand{\CC}{\mathbb{C}}

\newcommand{\PP}{\mathbb{P}}

\renewcommand{\to}{\xymatrix@1@=15pt{\ar[r]&}}
\renewcommand{\rightarrow}{\xymatrix@1@=15pt{\ar[r]&}}
\renewcommand{\leftarrow}{\xymatrix@1@=15pt{&\ar[l]}}
\renewcommand{\mapsto}{\xymatrix@1@=15pt{\ar@{|->}[r]&}}
\renewcommand{\twoheadrightarrow}{\xymatrix@1@=18pt{\ar@{->>}[r]&}}
\renewcommand{\hookrightarrow}{\xymatrix@1@=15pt{\ar@{^(->}[r]&}}
\newcommand{\hook}{\xymatrix@1@=15pt{\ar@{^(->}[r]&}}
\newcommand{\congpf}{\xymatrix@L=0.6ex@1@=15pt{\ar[r]^-\sim&}}
\renewcommand{\cong}{\simeq}

\begin{document}

\title[]{Maximal variation of curves on K3 surfaces}
\author[Y.\ Dutta \& D.\ Huybrechts]{Y. Dutta \& D.\ Huybrechts}

\address{Mathematisches Institut and Hausdorff Center for Mathematics,
Universit{\"a}t Bonn, Endenicher Allee 60, 53115 Bonn, Germany}
\email{ydutta@uni-bonn.de, huybrech@math.uni-bonn.de}

\begin{abstract} \noindent
We prove that curves in a non-primitive, base point free, ample linear system on a K3 surface have maximal variation. The result is deduced from general restriction theorems applied to the tangent bundle. We also show how to use specialisation to spectral curves to deduce information about the variation of curves contained in a K3 surface more directly. The situation for primitive linear systems is not clear at the moment.
However, the maximal variation holds in genus two and can, in many cases,
be deduced  from a recent result of van Geemen and Voisin \cite{VGV} confirming a conjecture
due to Matsushita. 

 \vspace{-2mm}
\end{abstract}

\maketitle
{\let\thefootnote\relax\footnotetext{The first author is funded
by the Hausdorff Center for Mathematics, Bonn (Germany's Excellence Strategy--EXC-2047/1--390685813). The second author is supported by the 
ERC Synergy Grant HyperK.}
\marginpar{}
}
\section{Introduction}
Smooth curves contained in K3 surfaces are special yet general. This has been
a guiding principle for important work over the last decades, cf.\ \cite{BeauvBourb,Laz,VoisinBourbaki}. For dimension reasons,
 a general  curve of high genus cannot be contained in any
smooth K3 surface, although those that are 
 behave in many respects like
 a general curve. It is a basic question to ask in how many ways, if at all, a curve
 can be embedded into a K3 surface. In other words,  how much do curves
 contained in a K3 surface vary, either within the given K3 surface or together with it?
\smallskip

\subsection{} 
Our main result is concerned with the variation of curves in an arbitrary but fixed
K3 surface.

\begin{thm}\label{thm:main}
Let $|H|$ be a base point free, ample linear system on a K3 surface such that the locus of non-reduced curves in $|H|$ has codimension at least three. 
Then, for all $m\geq 2$, the family of smooth curves in the linear system
$|mH|$ has maximal variation. 
\end{thm}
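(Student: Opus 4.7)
The plan is to reduce maximal variation to a vanishing statement for the restriction of the tangent bundle of $S$ to a general curve in $|mH|$, and then to deduce that vanishing from the stability of $T_S$ via a restriction theorem.

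For a smooth curve $C\in|mH|$ of arithmetic genus $g=m^2H^2/2+1\geq 2$, the Kodaira--Spencer map of the universal family at $C$ is the connecting homomorphism
$$
H^0(C,N_{C/S})\longrightarrow H^1(C,T_C)
$$
associated to the normal bundle sequence $0\to T_C\to T_S|_C\to N_{C/S}\to 0$. Since $H^0(T_C)=0$ for $g\geq 2$, its kernel is $H^0(C,T_S|_C)$. Because the automorphism group of the polarised K3 $(S,H)$ is finite, maximal variation of the family is equivalent to generic injectivity of this Kodaira--Spencer map, which in turn reduces the theorem to the vanishing $H^0(C,T_S|_C)=0$ for a general smooth $C\in|mH|$.

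Now $T_S|_C$ has rank two and degree $c_1(T_S)\cdot[mH]=0$, so any non-zero section would yield a sub-line bundle of slope $\geq 0$, contradicting $\mu$-stability. It therefore suffices to show that $T_S|_C$ is $\mu$-stable on a general $C\in|mH|$. The starting point is the classical fact that $T_S$ is $\mu_H$-stable with respect to every ample polarisation on a K3 surface; by the Mehta--Ramanathan theorem, $T_S|_C$ is then $\mu$-stable for a generic $C\in|kH|$ once $k\gg 0$. The task is to bring the statement down to every $m\geq 2$.

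The numerical bound coming out of Langer's effective restriction theorem is controlled by the discriminant $\Delta(T_S)=96$ and is in general much larger than two, so a purely numerical criterion will not suffice. This is where the geometric hypothesis on the codimension of the non-reduced members of $|H|$ must intervene: I expect it to bound the codimension in $|mH|$ of the locus along which stability of $T_S|_C$ could fail, by tracing any destabilising sub-line bundle on a candidate $C\in|mH|$ back to a family of non-reduced curves in $|H|$. Making this tracing precise for every $m\geq 2$, and thereby excluding all potential destabilisations of $T_S|_C$, is the main obstacle and the heart of the argument.
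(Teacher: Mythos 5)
Your reduction of maximal variation to the vanishing $H^0(C,\kt_S|_C)=0$ for a generic $C\in|mH|$ is exactly the paper's first step (the Kodaira--Spencer/normal bundle sequence argument; finiteness of $\Aut(S,H)$ is not actually needed, only that in characteristic zero generic quasi-finiteness is detected by injectivity of the differential). The gap is in everything after that. You propose to obtain the vanishing from $\mu$-stability of $\kt_S|_C$ on a generic $C\in|mH|$ for every $m\geq 2$, and to exclude destabilising sub-line bundles by ``tracing'' them back to the non-reduced locus of $|H|$; but no such tracing argument is given, and you yourself flag it as the unresolved heart of the matter. This is not a route the hypothesis supports: stability of $\kt_S|_D$ for $D\in|mH|$ is only known in ranges far from $m=2$ (Balaji--Koll\'ar for $m\geq 6$, Hein/Bogomolov for large $(H.H)$), the paper never establishes it for $2\leq m\leq 5$, and the codimension-three condition on non-reduced members of $|H|$ has nothing to do with controlling destabilising sub-line bundles on curves in $|mH|$.

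What the paper actually does is weaker in the statement it proves about $\kt_S|_D$ but requires a different set of ideas. First, Hein's restriction theorem for the \emph{primitive} system: for a rank-two, degree-zero, $\mu$-semistable bundle the restriction to a generic $C\in|H|$ is semistable, proved via the relative Harder--Narasimhan/Grauert--M\"ulich argument together with the Paranjape--Ramanan stability of the kernel bundle $M_C$ (with a separate analysis in the hyperelliptic case). This gives only $h^0(C,\kt_S|_C)\leq 2$ and $H^0(C,\kt_S|_C(-H))=0$ for generic $C\in|H|$, not vanishing. The vanishing for $m\geq 2$ is then obtained not by a stability statement on $|mH|$ but by degenerating $D\in|mH|$ to a union $C\cup C'$ of generic primitive curves, using semi-continuity, the vanishing $H^0(C,\kt_S|_C(-C))=0$ for singular reduced curves in $|H|$, and a case-by-case analysis over pencils in $|H|$ in which a surviving section would produce a trivial subsheaf of $p_\ast q^\ast\kt_S$ over a complete curve, contradicting $H^0(S,\kt_S)=0$. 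The codimension-three hypothesis enters precisely there: it guarantees that all members of a generic pencil are reduced and that a certain locus $Z_{C'}$ of reduced curves, if divisorial, contains a complete curve to run the same contradiction. So your proposal is missing the actual mechanism that makes the theorem work for $m=2$, and the mechanism you sketch in its place would, even if completed, be aiming at a stronger statement (stability) that is not available at this level of generality.
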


In other words, a generic curve in a non-primitive, base point free ample linear system occurs only finitely many times in its linear system. The result can also be expressed
by saying that the rational
map $\xymatrix{|mH|\ar@{-->}[r]& M_{g_m}}$ that associates with a smooth curve $C\subset S$ its isomorphism class $[C]\in M_{g_m}$ in the moduli space of smooth curves of genus $g_m\coloneqq\dim|mH|$ is generically
quasi-finite as soon as $m\geq2$.  

For $m\geq 3$ the assertion holds without any assumption on the non-reduced locus and we expect this to be true also for $m=2$. For $m=1$, we
can currently prove maximal variation only for $(H.H)=2$. It would
follow in this generality from a conjecture of Matsushita, which has been proved for
generic K3 surfaces by van Geemen and Voisin \cite{VGV}.

\subsection{} The problem of deforming curves in varying K3 surfaces was already studied by Mukai \cite{Mukai} who showed that the generic fibre of
the map $(C\subset S)\mapsto C$  is finite if the genus satisfies $g=11$ or $g\geq 13$. Here, $C$ is a primitive smooth curve 
contained in a varying K3 surface $S$.

Results due to Arbarello-- Bruno--Sernesi \cite{ABS14} and to Feyzbakhsh \cite{Fey} showed that for a polarized K3 surface $(S,H)$ with the same hypotheses on the genus
and with $\Pic(S)=\ZZ\cdot H$ the K3 surface $S$ can be uniquely reconstructed from
any smooth curve $C\in |H|$. In particular, the map $(C\subset S)\mapsto C$ is injective at such a point and then, of course, for fixed $S$ the rational map
$\xymatrix{|H|\ar@{-->}[r]& M_{g}}$ has finite fibres.
Results without restrictions on the Picard number have been proved
by Arbarello-- Bruno--Sernesi \cite{ABS17}, Ciliberto--Dedieu--Sernesi \cite{CDS},
and Ciliberto--Dedieu \cite{CD}. Their results cover most of the cases dealt with
by Theorem \ref{thm:main} using completely different methods.
In fact, they can handle also the case $m=1$ albeit with
the additional assumption on the Clifford index to be at least three. Another advantage of their approach is that they
prove results for all smooth curves in a linear system, while our approach is mostly
restricted to generic curves.


An infinitesimal analogue, proving the injectivity of the tangent map under similar conditions, was worked out by Totaro \cite{Totaro}. These results imply maximal variation
for primitive and therefore, also non-primitive ample linear systems of high enough genus on a generic polarized K3 surfaces.  We also wish to draw attention to the recent paper
\cite{CG} where maximal variation of the normalization of singular curves of unbounded
degree is studied.

\subsection{} In this note, we study curves contained in an arbitrary but fixed K3 surface $S$.
Without any condition on the Picard group of $S$ or on the genus of the curves,
our result says that curves in a non-primitive, base point free, ample linear system have maximal variation, i.e.\ up to finite ambiguity a generic curve $C$ is embedded uniquely in its ambient K3 surface.

Under additional assumptions on $m$ or $(H.H)$ the result follows directly from
existing results.
 For example, if $(H.H)\geq 48$, it is an immediate consequence of the work of Hein \cite{Hein2}. For $m\geq 6$ it follows, at least for non-hyperelliptic curves, from a general restriction result due to Balaji--Koll\'ar \cite{BaKo} and for $m\geq5$ from Totaro's Bott vanishing theorems \cite{Totaro}. Our approach is logically independent of these results and uses as the main input another result of Hein \cite{HeinDiss} which we state and prove in Section \ref{sec:Hein}.

For K3 surfaces with maximal Mumford--Tate group, maximal variation as claimed
here is related to a recent result of van Geemen and Voisin \cite{VGV}. Establishing the result in the non-generic case lends further strong evidence for a conjecture of Matsushita.

\subsection{} The original idea of our approach was to deduce maximal variation for
curves contained in a K3 surface from a similar statement for spectral curves. More precisely, instead of studying curves in a linear system $|mH|$ on a K3 surface
one can look at spectral covers $D\twoheadrightarrow C$ of degree $m$ of
a distinguished primitive curve $C\in |H|$. 

Here, one thinks of the spectral curve $D$ as being
contained in the cotangent bundle  ${\mathbb V}(\omega_C)$ of $C$. So the projective K3 surface $S$
is replaced by the quasi-projective symplectic surface
${\mathbb V}(\omega_C)$  and curves in $|mH|$  specialise to spectral covers ${\mathbb V}(\omega_C)\supset D\twoheadrightarrow C$. This degeneration technique was first studied by Donagi--Ein--Lazarsfeld \cite{DEL}
and exploited in a recent paper of de Cataldo--Maulik--Shen \cite{MaulikShen}.

It turns out that the variation of spectral curves is not maximal, so Theorem \ref{thm:main} fails when K3 surfaces are replaced by ${\mathbb V}(\omega_C)$.
However, due to a result of Hodge and Mulase \cite{HM}, the isotrivial families of
spectral curves are well understood, which allows one to bound the
dimension of isotrivial families in $|mH|$ for $m\geq2$.  Assuming Matsushita's conjecture
 that Lagrangian fibrations of hyperk\"ahler manifolds have
zero or maximal variation, would then immediately yield maximal variation in $|mH|$ for $m\geq2$.
Thus, the verification of the conjecture  \cite{VGV} for K3 surfaces 
with maximal Mumford--Tate group and Picard number $\rho\leq 17$
gives an alternative proof for the maximal variation in these cases.


\subsection{Outline of content} 
In Section \ref{sec:HBK} we recall and prove a result due to Hein, 
see Theorem \ref{thm:Hein}, from which we conclude
that $h^0(\kt_S|_C)\leq 2$ for a generic ample curve $C$ on $S$, see Corollary \ref{cor:Hein}.
Next, we  explain how to use a result of Balaji--Koll\'ar
to prove Theorem \ref{thm:main} for $|H|$ non-hyperelliptic and $m\geq 6$,
see Corollary \ref{cor:BK}.
The main result of Section \ref{sec:HBK} and the key technical result of the paper is
Theorem \ref{thm:main2} which shows $h^0(\kt_S|_C)=0$ for a generic $C\in |mH|$, $m\geq 2$.
The section concludes with Remark \ref{rem:m=3} addressing the case $m\geq 3$ without additional assumptions on the non-reduced locus.

In Section \ref{sec:maxvar} we study the rational map 
$\xymatrix{|H|\ar@{-->}[r]&M_g}$, describe its derivative and
 show that its generic fibre is of dimension at most two, see Corollary \ref{cor:dim2}. Theorem \ref{thm:main} is then an immediate consequence, its proof is presented in Section \ref{sec:proof}. It is here where the technical assumption on the non-reduced locus is used.
In this section, we also provide examples to show that neither ampleness nor base point freeness can be dropped from the hypothesis of our main theorem.
In the rest of this section, we explain the relation to Totaro's Bott vanishing results.

In Section \ref{sec:Hitchin} we study the analogous situation provided by spectral curves and the Hitchin system. After briefly recalling the basic setup we give a short proof of a result due to Hodge and Mulase showing that every spectral curve of degree at least two is contained in a $(g+1)$-dimensional isotrivial family, see Corollary \ref{cor:HM}. We also explain how to use specialisation to the normal cone
 to deduce from the spectral curves situation information about variation
 of curves on K3 surfaces, see Corollary \ref{cor:atmostg}. The result is then combined
with the verification of  Matsushita's conjecture by van Geemen and Voisin for very general
hyperk\"ahler manifolds to give an alternative proof of Theorem \ref{thm:main} for very general K3 surfaces without additional conditions on $|H|$ and without using 
any information about the restriction of the tangent bundle.

The final Section \ref{sec:counterexamples} contains more on the relation to Matsushita's conjecture and the proof of Theorem \ref{thm:main} for $g=2$ and $m\geq1$.

\subsection{Conventions}
 The main result is proved for smooth projective K3 surfaces over an algebraically closed field $k$ of characteristic zero. In Sections \ref{sec:Hitchin} and \ref{sec:counterexamples} we mention the Hodge theoretic result \cite{VGV} for
 which $k=\CC$ is needed. 
 
 By $H$ we denote a base point free, ample line bundle on $S$, not necessarily primitive. 
The linear system $|H|$ is called non-hyperelliptic if it contains a smooth non-hyperelliptic curve. Any very ample linear system is non-hyperelliptic. The degree of $S$ with respect to $H$ is $d\coloneqq (H.H)$ and the genus of $H$ is the genus of a smooth
curve in $|H|$, i.e.\
$2g-2=d$ or, in other words, $g=\dim |H|$.

\noindent
{\bf Acknowledgements.}  We wish to thank Tim B\"ulles and John Ottem for  email correspondences and Richard Thomas for enquiring about the current status of the problem which rekindled our interest. Thanks to Edoardo Sernesi for his help with the literature on singular curves. We are grateful to Thomas Dedieu and Edoardo Sernesi 
for comments on the first version of the article and explanations concerning their results.

\section{Restricting the tangent bundle}\label{sec:HBK}
As we will recall in Section \ref{sec:maxvar}, the infinitesimal nature of the rational map $\xymatrix{|H|\ar@{-->}[r]& M_g}$ at a smooth curve $C\in |H|$
is determined by the restriction $\kt_S|_C$ of the tangent bundle $\kt_S$ of the K3 surface $S$, see Lemma \ref{lem:tangent}. Ideally
one would like $\kt_S|_C$ to be stable, at least for the generic
curve $C\in |H|$, since stability ensures the vanishing of global sections of $\kt_S|_C$ which is crucial for our main theorem.  However, semistability can also be exploited at the expense of significantly more work. This is the core of our main technical result.
 All three aspects will be discussed in this section.

\subsection{}\label{sec:Hein}
The following result is originally due to Hein \cite[Kor.\ 3.11]{HeinDiss}. Since the result
was never published and is crucial for what follows, we include a complete proof for the convenience of the reader. See Remark \ref{rem:hyperell} for a weaker assertion in the case of hyperelliptic linear systems.

\begin{thm}[Hein]\label{thm:Hein}
Let $E$ be a bundle of rank two on a K3 surface $S$. Assume $E$ is $\mu$-semistable of degree zero with respect to a base point free, ample line bundle $H$. 
Then the restriction $E|_C$ to a generic, non-hyperelliptic curve $C\in |H|$ is again semistable.
\end{thm}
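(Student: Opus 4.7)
The plan is to argue by contradiction. Assume that for the generic smooth non-hyperelliptic $C \in |H|$ the restriction $E|_C$ is unstable, and let $L_C \subset E|_C$ denote the maximal destabilising line subbundle, of some degree $d \geq 1$. By openness of instability in families and by semicontinuity of the slope of the maximal destabilising subsheaf, $d$ is constant on a dense Zariski open subset $U \subset |H|$. Denoting the universal family by $\pi\colon \kc \to U$ and the evaluation map by $\phi\colon \kc \to S$, the relative Harder--Narasimhan filtration of $\phi^{*}E$ with respect to $\pi$ produces a line subbundle
\[
L \hookrightarrow \phi^{*}E
\]
on $\kc$ whose restriction to each fibre $C$ is precisely $L_C$.

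The key device is to reinterpret this data on the projective bundle $p\colon \PP(E) \to S$. Each line subbundle $L_C \subset E|_C$ determines a section $\sigma_C\colon C \to \PP(E)|_C$ of degree $d$ (i.e.\ with $\sigma_C^{*}\ko_{\PP(E)}(1)$ of degree $d$), and these sections sweep out a closed subvariety $\Sigma \subset \PP(E)$ of dimension $1 + \dim U = g + 1$. Because $|H|$ is base point free, the evaluation $\phi\colon \kc \to S$ is surjective, so the curves in $U$ cover $S$, and $p(\Sigma) = S$. Since $\dim \PP(E) = 3$ and $\dim \Sigma = g+1 \geq 2$, one expects $\Sigma$ to be a divisor meeting the generic $\PP^{1}$-fibre of $p$ in finitely many points (I will verify this below).

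Granting that $\Sigma$ is a prime divisor of relative degree $a \geq 1$ over $S$, its class in $\mathrm{Pic}(\PP(E))$ has the form $a\,\xi + p^{*}\beta$ with $\xi = c_{1}(\ko_{\PP(E)}(1))$ and $\beta \in \mathrm{Pic}(S)$, and $\Sigma$ corresponds to a rank-one quotient (respectively, after twisting, a rank-one subsheaf) $E \twoheadrightarrow F$ on $S$. Intersecting with the $\PP^1$-bundle $\phi\colon \kc \to U$ and using that $\deg(\sigma_C^{*}\ko_{\PP(E)}(1)) = d$ for each $C \in U$, the slope of this sub/quotient with respect to $H$ is forced to be strictly negative (resp.\ positive), contradicting $\mu_{H}$-semistability of $E$.

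The main obstacle, and the place where the non-hyperelliptic hypothesis must enter, is the dimension count ensuring that $\Sigma$ is a divisor and not of smaller dimension. A priori the various sections $\sigma_{C}$ could collapse onto a common surface in $\PP(E)$, which would happen if infinitely many curves $C \in U$ share the same destabilising line subbundle, or if the map $\kc \to \Sigma$ has positive-dimensional fibres. Ruling this out requires a transversality argument: generically, distinct $C, C' \in U$ meet in finitely many points on $S$, and on each common point the two sections $\sigma_{C}, \sigma_{C'}$ select the same line in $E$ only exceptionally. The hyperelliptic case must be excluded because the hyperelliptic involution would let all the $L_{C}$ be pulled back from $\PP^{1}$, potentially producing a single ``universal'' subbundle that is compatible with semistability of $E$ on $S$. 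Once $\Sigma$ is known to be divisorial, extracting the global destabilising sub/quotient of $E$ and computing its slope is a routine intersection-theoretic calculation on $\PP(E)$.
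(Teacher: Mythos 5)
There is a genuine gap, and it sits exactly at the point you flag as ``the main obstacle.'' First, the dimension bookkeeping is off: $\Sigma$ lives inside the threefold $\PP(E)$, so it cannot have dimension $g+1$; for $g\geq 3$ the map $\kc|_U\to\PP(E)$ necessarily has positive-dimensional fibres, and the case you must expect generically is that $\Sigma$ is \emph{dense} in $\PP(E)$. In that dominant case there is no divisor class to intersect with, and your entire intersection-theoretic endgame has nothing to act on; the proposal contains no argument for this case, which is the heart of the matter. Second, even when $\Sigma$ is a divisor of relative degree $a\geq 2$ over $S$, it does \emph{not} correspond to a rank-one sub- or quotient sheaf of $E$: a divisor in the class $a\xi+p^\ast\beta$ is a section of a twist of $S^a E$ (or its dual), so ``extracting the global destabilising sub/quotient'' is not a routine step and is in fact false as stated. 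Only $a=1$ gives a line subbundle of $E$, and that is the easy case (the destabilising family is pulled back from $S$ and contradicts $\mu$-semistability of $E$ directly); the paper dispatches it in one sentence. Third, you have not located where the non-hyperelliptic hypothesis actually does work: your remark about the hyperelliptic involution is speculation, not a mechanism, and nothing in your scheme would visibly fail for hyperelliptic $C$.

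The missing idea is the Grauert--M\"ulich-type differential argument. After taking the relative Harder--Narasimhan filtration $0\subset F\subset q^\ast E$ (which you do set up correctly), the dichotomy is: either $F$ is pulled back from $S$ (contradiction with semistability of $E$), or the classifying map $\kc|_U\to{\rm Grass}_S(E,1)$ is non-constant along the fibres of $q$, in which case its differential yields a non-zero map $\kt_q\to F^\ast\otimes F'$. The decisive inputs are then (i) the identification, via the relative Euler sequence of $q\colon\kc\cong\PP(\kk)\to S$, of $\kt_q|_C$ with the kernel bundle $M_C$ of the evaluation map $H^0(C,\omega_C)\otimes\ko_C\twoheadrightarrow\omega_C$, and (ii) the Paranjape--Ramanan theorem that $M_C$ is stable for $C$ non-hyperelliptic, with $\mu(M_C)=-2$. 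A non-zero map $\kt_q\to F^\ast\otimes F'$ restricted to a generic fibre would give a line-bundle quotient of $M_C$ of degree at most $-2\deg(F|_C)\leq -2$, contradicting that stability. This is precisely where non-hyperellipticity enters (for hyperelliptic $C$ one only has $M_C\cong\pi^\ast\ko(-1)^{\oplus g-1}$, polystable, giving the weaker bound of Remark~\ref{rem:hyperell}). Without step (i) and the stability of $M_C$, or a genuine treatment of the case where the sections sweep out all of $\PP(E)$, the proposed argument does not close.
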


\begin{proof} Consider the universal family $$\xymatrix{ |H|&
\ar[l]_-p \kc\ar[r]^-q& S}$$ of curves in the linear system $|H|$. If the restriction $E|_C$ to the generic curve $C\in |H|$ is not semistable, then there exists a relative Harder--Narasimhan
filtration $0\subset F\subset q^\ast E$, i.e.\ $F$ and $F'\coloneqq q^\ast E/F$ are flat over some dense open subset $U\subset |H|$ and the restriction $F|_C$ to a fibre $C\coloneqq\kc_s\subset\kc$, $s\in U$, is a line bundle of positive degree, see \cite[Sec.\ 2.3]{HL} for details and references.
 
 Now, unless $F\subset q^\ast E|_U$ is the pull-back of a sub-line bundle of $E$ on $S$, which would contradict the semistability of $E$, the differential of the induced 
 morphism $\kc|_U\to {\rm Grass}_S(E,1)$ yields
a non-zero map $\kt_q|_U\to F^\ast\otimes F'$. This is the standard argument in the proof  of the Grauert--M\"ulich theorem, cf.\ \cite[Thm.\ 3.1.2]{HL}.
Here, $\kt_q$ is the relative tangent bundle
of the projection $q$. Note that $\kt_q$ restricted to a fibre $C=\kc_s$, $s\in U$, is isomorphic
to the kernel $M_C$ of the
evaluation map $H^0(C,\omega_C)\otimes\ko_C\twoheadrightarrow \omega_C$. 
Indeed, the relative Euler sequence 
$$\xymatrix@C=19pt{0\ar[r]&\ko_\kc\ar[r]&q^\ast\kk\otimes p^\ast \ko(1)\ar[r]&\kt_q\ar[r]&0}$$ of the projective bundle 
$q\colon \kc\cong\PP(\kk)\to S$, where 
$\kk\coloneqq{\rm Ker}\left(H^0(S,H)\otimes \ko_S\twoheadrightarrow H\right)$,
restricted to $C$ is of the form
$\xymatrix@C=19pt{0\ar[r]&\ko_C\ar[r]&\kk|_C\ar[r]&\kt_q|_C\ar[r]&0}$
and, therefore, $M_C \cong\kt_q|_C$.

Hence, the image
of any non-trivial morphism $\kt_q\to F^\ast\otimes F'$ restricted to $C$ would yield a line bundle quotient
$M_C\twoheadrightarrow L$ of degree $\deg(L)\leq \deg(F^\ast)+\deg(F')=-2\deg(F)\leq -2$ on the generic curve $C\in |H|$.

The latter is excluded by  a result of Paranjape and Ramanan \cite[Lem.\ 3.41]{PR} saying that
on a smooth projective curve $C$ the bundle $M_C$  is stable if $C$ is not hyperelliptic, see also \cite{PaDiss}. Indeed, stability in particular implies
that any quotient line bundle $M_C\twoheadrightarrow L$ satisfies $$-2=\frac{2-2g(C)}{g(C)-1}=\mu(M_C)<\deg(L),$$which concludes the proof.\footnote{The last part of the argument replaces the proof of \cite[Kor.\ 3.11]{HeinDiss}, which seems incomplete.
An argument is missing to ensure that the line bundle $L\otimes\omega_C\otimes A^\ast$ is special and, more importantly, effective or weaker that it has non-negative degree.
}
\end{proof}

\begin{remark}
The classical restriction result due to Flenner asserts the semistability of the 
generic restriction
$E|_C$ for $C\in |mH|$ as soon as $m\geq 2(H.H)$ and $H$ is very ample, cf.\ \cite[Thm.\ 7.1.1]{HL}.
So not only that one has to pass to multiples of the linear system, but the bound
in Flenner's theorem depends on the degree of the surface. 
\end{remark}

\begin{remark}\label{rem:hyperell}
When the generic curve $C\in|H|$ is hyperelliptic, the bundle $M_C$
is the pull-back of the kernel of the evaluation map
$\eta\colon H^0(\PP^1,\ko(g-1))\otimes\ko\twoheadrightarrow\ko(g-1)$ on $\PP^1$ under the hyperelliptic quotient $\pi\colon C\twoheadrightarrow \PP^1$. The kernel of $\eta$ is readily identified as
$\ko(-1)^{\oplus g-1}$ and hence $M_C\cong \pi^\ast\ko(-1)^{\oplus g-1}$.
In particular, $M_C$ is really not stable in this case, but it is still polystable.
Therefore, any line bundle quotient $M_C\twoheadrightarrow L$ as in the proof above
satisfies the weaker inequality $\deg(L)\geq -2$. Furthermore, if $\deg(L)=-2$, then
$L\cong\pi^\ast\ko(-1)$.
\end{remark}

We apply the theorem to the tangent bundle $\kt_S$ of a K3 surface $S$
which is $\mu$-stable with respect to any ample line bundle on $S$, see \cite[Sec.\ 9.4]{HuyK3} for a proof and references.

\begin{cor}\label{cor:Hein}
Let $H$ be a base point free, ample line bundle on a K3 surface $S$.
Then for a generic curve $C\in |H|$ either 
$H^0(C,\kt_S|_C)=0$ or there exists a short exact sequence 
$\xymatrix@C=19pt{0\ar[r]&\ko_C\ar[r]&\kt_S|_C\ar[r]&\ko_C\ar[r]&0.}$
In particular, $$\dim H^0(C,\kt_S|_C)\leq2\text{ and } H^0(C,\kt_S|_C(-H))=0.$$
\end{cor}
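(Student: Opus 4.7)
The plan is to apply Theorem \ref{thm:Hein} to $E = \kt_S$. The tangent bundle of a K3 surface is $\mu$-stable with respect to any ample line bundle (as recalled just above the corollary), and $\det \kt_S = \ko_S$ since $K_S = 0$, so $\kt_S$ has degree zero. Hence, for a generic non-hyperelliptic $C \in |H|$, Theorem \ref{thm:Hein} ensures that $\kt_S|_C$ is semistable of slope zero.

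To analyse global sections, I would suppose $H^0(C, \kt_S|_C) \neq 0$ and pick a nonzero section, giving an inclusion $\ko_C \hookrightarrow \kt_S|_C$. Let $L \subset \kt_S|_C$ be the saturation of its image; this is a line subbundle with $\deg L \geq 0$ (since it contains $\ko_C$), while semistability of $\kt_S|_C$ forces $\deg L \leq 0$. Hence $\deg L = 0$, and the inclusion $\ko_C \hookrightarrow L$ between line bundles of equal degree must be an isomorphism, so $L \cong \ko_C$. The quotient $Q \coloneqq \kt_S|_C/L$ is then a degree-zero line bundle, and using $\det \kt_S|_C \cong \ko_C$ one obtains $Q \cong L^{-1} \cong \ko_C$. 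This delivers the exact sequence $0 \to \ko_C \to \kt_S|_C \to \ko_C \to 0$, from which $\dim H^0(C, \kt_S|_C) \leq 2$ follows at once.

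For the vanishing $H^0(C, \kt_S|_C(-H)) = 0$, note that twisting by a line bundle preserves semistability, so $\kt_S|_C(-H)$ is semistable of slope $-(H.H) < 0$. Any nonzero global section would produce a trivial sub-line bundle of slope zero exceeding $\mu(\kt_S|_C(-H))$, contradicting semistability; this argument is independent of whether the first alternative in the dichotomy holds or not.

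The main obstacle I anticipate is the hyperelliptic case, where Theorem \ref{thm:Hein} does not directly deliver the semistability of $\kt_S|_C$: Remark \ref{rem:hyperell} only yields the weaker bound $\deg L \geq -2$ on the relevant quotient of $M_C$. Combining this inequality with $\det \kt_S|_C \cong \ko_C$ forces any destabilising line subbundle $F \subset \kt_S|_C$ to satisfy $F^{\otimes 2} \cong \pi^\ast \ko_{\PP^1}(1)$ for the hyperelliptic cover $\pi$, leaving only a rigid degree-one configuration. A short direct analysis of this case (exploiting that a degree-one line bundle on $C$ carries at most one section, and that the quotient $F^{-1}$ has negative degree) should still yield both $h^0(\kt_S|_C) \leq 2$ and $H^0(\kt_S|_C(-H)) = 0$, completing the proof.
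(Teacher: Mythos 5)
Your treatment of the semistable case is correct and is essentially the paper's argument (the paper passes through the Jordan--H\"older filtration where you saturate a section, but both routes give $L_1\cong L_2^\ast$ and then the trivial extension or the vanishing), and the twisted vanishing $H^0(C,\kt_S|_C(-H))=0$ by slope reasons is also fine. The genuine gap is in the hyperelliptic case. There you only aim for the numerical consequences ($h^0\leq 2$ and the twisted vanishing), but the statement to be proved is the dichotomy: if $H^0(C,\kt_S|_C)\neq 0$ then $\kt_S|_C$ sits in a sequence $0\to\ko_C\to\kt_S|_C\to\ko_C\to 0$. In your ``rigid degree-one configuration'' --- a destabilising subbundle $F$ of degree $1$ with $F^{\otimes 2}\cong\pi^\ast\ko_{\PP^1}(1)$ --- it is perfectly possible a priori that $F$ is effective, giving $h^0(C,\kt_S|_C)=1$ while $\kt_S|_C$ is unstable, so no extension of $\ko_C$ by $\ko_C$ exists. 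Your sketch does not exclude this scenario, and the ``at most one section of a degree-one bundle'' observation cannot: it bounds $h^0$, it does not force $h^0=0$. Note that the dichotomy, not just the bound, is what is used downstream (cases (i) and (ii) in the proof of Theorem \ref{thm:main2} rely on the sequence (\ref{eqn:sesTS})), so proving only the ``in particular'' part genuinely weakens the corollary.

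The paper closes exactly this case by a global geometric argument that is absent from your proposal: if for the generic hyperelliptic $C\in|H|$ the degree-one subbundle were effective, it would be $\ko_C(x_C)$ with $\ko_C(2x_C)\cong\pi^\ast\ko_{\PP^1}(1)$, so $x_C$ is a ramification point of the hyperelliptic involution; letting $C$ vary, the assignment $C\mapsto x_C$ defines a rational map from the projective space $|H|$ to $S$ whose image lies in the ramification curve $C_0$ of the double cover $S\to\bar S$. Since $\bar S$ is $\PP^2$ or a Hirzebruch surface and the branch curve lies in $|\omega_{\bar S}^{-2}|$, the curve $C_0$ is not rational, whereas the image of a rational map from $|H|$ must be; this contradiction shows $L_1$ is not effective for generic $C$, hence $h^0=0$ in the unstable hyperelliptic case and the dichotomy holds. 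Without an argument of this kind (or some substitute ruling out the effective degree-one subbundle generically), your proof of the corollary as stated is incomplete.
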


\begin{proof}
Since $H^0(C,\kt_S|_C)\cong\Hom(\ko_C,\kt_S|_C)$ and $\deg(\kt_S|_C)=0$, stability of the restriction $\kt_S|_C$ would yield $h^0(C,\kt_S|_C)=0$. 

If $\kt_S|_C$ is only semistable, then we have an exact sequence $\xymatrix@C=15pt{0\ar[r]&L_1\ar[r]&\kt_S|_C\ar[r]&L_2\ar[r]&0}$ with $L_1,L_2$ both line bundles of degree zero. In fact, since $\det(\kt_S|_C)\cong\ko_C$, we have $L_1\cong L_2^\ast$. Thus, only two cases can occur. Either, $L_1\cong L_2\cong\ko_C$, in which case $h^0(C,\kt_S|_C)\leq 2$, or both
line bundles, $L_1$ and $L_2$, are non-trivial and hence $h^0(C,\kt_S|_C)=0$. 
By Theorem \ref{thm:Hein}, this concludes the proof in the case of a non-hyperelliptic linear system.

If the generic curve $C\in |H|$ is hyperelliptic and $\kt_S|_C$ is not semistable, then
by Remark \ref{rem:hyperell} the Harder--Narasimhan filtration  
$\xymatrix@C=15pt{0\ar[r]&L_1\ar[r]&\kt_S|_C\ar[r]&L_2\ar[r]&0}$ satisfies $L_2\cong L_1^\ast$ and  $\deg(L)=-2$ for the invertible sheaf $L\coloneqq L_1^\ast\otimes L_2\cong L_1^{-2}$, which is in fact the pull-back $\pi^\ast\ko(-1)$ under the hyperelliptic
quotient $\pi\colon C\to \PP^1$.
Therefore, $\deg(L_1)=1$, $\deg(L_2)=-1$, and $H^0(C,L_1)\cong H^0(C,\kt_S|_C)$.
Hence, either $L_1$ is effective, and then $h^0(C,\kt_S|_C)=h^0(C,L_1)=1$, or it is not, in which
case $h^0(C,\kt_S|_C)=0$.

If $L_1$ is effective, then the Harder--Narasimhan filtration of the restriction $\kt_S|_C$
to the generic curve is of the form $\xymatrix@C=19pt{0\ar[r]&\ko(x_C)\ar[r]&\kt_S|_C\ar[r]&\ko(-x_C)\ar[r]&0}$ for a distinguished point $x_C\in C$. 
Note that $\ko(-2x_C)\cong L\cong\pi^\ast\ko(-1)$ implies that $x_C$ is a fixed point of the hyperelliptic
involution or, in other words, $x_C$ is contained in the ramification curve
$C_0$ of the hyperelliptic quotient $S\to \bar S$.
Consider the rational section of the universal curve ${\mathcal C}\to |H|$
given by mapping $C$ to the distinguished point $x_C$. Then the image of
the induced rational map $\xymatrix{|H|\ar@{-->}[r]&S}$ is 
$C_0$. However,
it is known that $\bar S$ is either $\PP^2$ or one of the Hirzebruch surfaces ${\mathbb F}_i$, $i=0,\ldots,4$, see \cite{Dolgachev,Reid}. Since $C_0$ considered as a curve on $\bar S$ must be contained in $|\omega_{\bar S}^{-2}|$, it cannot be rational. This yields a contradiction. 

The claimed vanishing is a consequence of the above.\end{proof}

\begin{remark}\label{rem:BM} 
 (i)  Note that the vanishing  $H^0(C,\kt_S|_C\otimes H^\ast)=0$ for generic $C$
 can alternatively be deduced from a result  due to Beauville--M\'erindol \cite[Prop.\ 3]{BM}, at least for hyperplane sections: The  normal bundle sequence 
 of any smooth hyperplane section $C\subset S$ is not split unless $C$ is  the fixed locus of an involution. In particular, this does not happen for the generic smooth 
 hyperplane sections and, in fact, can only occur for small genus.
 See Section \ref{sec:HMspectralcurves} for a comparison with the situation for spectral curves.\smallskip
 
 (ii) It turns out that the situation is even better for singular curves.
 More precisely, we have $H^0(C,\kt_S|_C(-C))=0$ for every singular, but reduced curve
 $C\in|H|$ as soon as $g>2$. To prove this, we use the conormal bundle sequence $$\xymatrix{0\ar[r]&\ko_C(-C)\ar[r]&\Omega_S|_C\ar[r]&\Omega_C\ar[r]&0}$$ and
 the natural map $\varphi\colon\Omega_C\to\omega_C$ obtained by
 tensoring $\ko_C(-C)\otimes \Omega_C\to \omega_S|_C\cong\ko_C$, 
 $f\otimes \omega\mapsto df\wedge \omega$, with the line bundle $\ko_C(C)$.
 Clearly, $\ker(\varphi)$ and $\coker(\varphi)$ are concentrated in
 the singular points of $C$ of which there are at most $g$.  In fact, it is known classically that $h^0(\ker(\varphi))=h^0(\coker(\varphi))\leq 2(g-g(\tilde C))$, where $\tilde C\to C$ is the normalization, cf.\ \cite{DiGr,BuGr}. In particular, $0<h^0({\rm Ker}(\varphi))\leq 2g$.

Consider $0\ne s\in H^0(C,\kt_S|_C(-C))$ interpreted as a homomorphism
$s\colon \ko_C(C)\to \kt_S|_C\cong\Omega_S|_C$. Since $\omega_C\cong\ko_C(C)$ and $\coker(\varphi)\ne0$, the composition of $s$ with $\Omega_S|_C\to\Omega_C\to \omega_C$ has to be trivial. In other words, the composition $\ko_C(C)\to\Omega_S|_C\to \Omega_C$ takes image in the torsion
subsheaf $\ker(\varphi)\subset\Omega_C$.

Hence, $s$ induces a non-trivial homomorphism
$\ko_C(C)\otimes \ki_{Z}\to \ko_C(-C)$:
$$\xymatrix{\ko_C(C)\otimes\ki_Z\ar[r]\ar@{-->}[d]&\ko_C(C)\ar[d]_s\ar[r]&\ker(\varphi)\ar@{^(->}[d]\\
\ko_C(-C)\ar[r]&\Omega_S|_C\ar[r]&\Omega_C}$$ 
Here, $Z\subset C$ is contained in the singular locus of $C$ with $h^0(\ko_Z)\leq h^0(\ker(\varphi))$. Together with $(2C.C)=4g-4$ this contradicts $h^0(\ker(\varphi))\leq 2g$ for $g>2$.
\end{remark}

\subsection{} 
A related result of Balaji and Koll\'ar \cite[Thm.\ 2]{BaKo}
translated to our situations gives the stability of $\kt_S|_C$ for a generic
$C \in |mH|$, $  m\geq 6$,  and $|H|$ a base point free, ample, non-hyperelliptic linear system on $S$. As observed in the proof of Corollary \ref{cor:Hein}, stability has the following direct consequence.

\begin{cor}\label{cor:BK}
Assume $|H|$ is a base point free, ample, non-hyperelliptic linear system
on a K3 surface $S$. Then
for a generic curve $D\in |mH|$, $m\geq 6$,  the restriction $\kt_S|_D$ is stable
and, in particular, $H^0(D,\kt_S|_D)=0$.\qed
\end{cor}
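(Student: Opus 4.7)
The plan is to apply the Balaji--Koll\'ar restriction theorem \cite[Thm.\ 2]{BaKo} directly to the tangent bundle $\kt_S$, and then extract the cohomological vanishing from stability in exactly the way already used in the proof of Corollary \ref{cor:Hein}.

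First, I would record the structural input needed about $\kt_S$. Since $\omega_S\cong\ko_S$, we have $\det(\kt_S)\cong\ko_S$, so that $\kt_S|_D$ has degree zero and slope zero with respect to any polarisation for every curve $D\subset S$. Moreover, $\kt_S$ is $\mu$-stable with respect to every ample line bundle on $S$; this is the classical fact recalled just before Corollary \ref{cor:Hein} and referenced in \cite[Sec.\ 9.4]{HuyK3}.

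Next, I would invoke the Balaji--Koll\'ar theorem. Translated to the present setting, it guarantees that for a $\mu$-stable bundle $E$ on $S$ and a base point free, ample, non-hyperelliptic linear system $|H|$, the restriction $E|_D$ to a generic $D\in|mH|$ is again $\mu$-stable as soon as $m\geq 6$. Applied to $E=\kt_S$, this produces the stability of $\kt_S|_D$ stated in the corollary, with no further work on $S$.

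Finally, the vanishing $H^0(D,\kt_S|_D)=0$ is a formal consequence. A non-zero element of $H^0(D,\kt_S|_D)$ corresponds to a non-zero homomorphism $\ko_D\rightarrow\kt_S|_D$, whose image, after saturation, is a rank-one subbundle $L\subset\kt_S|_D$ with $\deg(L)\geq 0$, i.e.\ $\mu(L)\geq 0=\mu(\kt_S|_D)$. Since $\kt_S|_D$ has rank two, $L$ is a proper subbundle, and stability forces the strict inequality $\mu(L)<\mu(\kt_S|_D)=0$, a contradiction. The only real issue is to verify that the hypotheses of \cite[Thm.\ 2]{BaKo} are indeed met under the assumption that $|H|$ is base point free, ample and non-hyperelliptic, rather than, say, very ample; this matching of hypotheses (and the resulting bound $m\geq 6$) is the one non-routine point, since everything else is automatic from $\mu$-stability of $\kt_S$.
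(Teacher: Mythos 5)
Your proposal is correct and follows essentially the same route as the paper: the paper likewise applies the Balaji--Koll\'ar restriction theorem to the $\mu$-stable tangent bundle $\kt_S$ and then deduces $H^0(D,\kt_S|_D)=0$ from stability of the slope-zero, rank-two restriction, exactly as observed in the proof of Corollary \ref{cor:Hein}. The one caveat you flag---checking that the hypotheses of \cite[Thm.\ 2]{BaKo} match the base point free, ample, non-hyperelliptic setting---is also the only point the paper treats implicitly (``translated to our situations''), so nothing is missing.
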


Neither \cite{BaKo} nor the above consequence will be used in what follows. Nonetheless it emphasises our point that multiple linear systems behave better than primitive ones.

\begin{remark}\label{rem:BogoHein}
(i) One should compare their stability result to other ones. For instance, a result of Bogomolov, cf.\ \cite[Thm.\ 7.3.5]{HL},  implies that $\kt_S|_C$ is stable for any smooth  curve $C \in |mH|$ for $m \geq 96$ and $|H|$ a very ample linear system on $S$. The
bound is significantly bigger than that of \cite{BaKo}. A better bound is due to Hein
 \cite[Thm.\ 2.8]{Hein2}; namely the restriction $\kt_S|_C$ to any smooth curve $C\in |mH|$ is stable if $2m(H.H) > 96$, which holds for all $m \geq 1$ as soon as $(H.H) > 48$.
 \smallskip
 
(ii)  For smaller degree there exist examples of positive-dimensional isotrivial subfamilies of curves $C\in |H|$ with non-stable restriction $\kt_S|_C$, see \cite{GounelasOttem} and
 Example \ref{ex:Fermat}.
\end{remark}

\begin{remark}
The vanishing $H^0(D,\kt_S|_D)=0$ for $D\in |\ko(m)|$, $m\geq 5$, also follows from  \cite[Thm.\ 5.1]{Totaro}, see Remark \ref{rem:Totaro}.
\end{remark}

\subsection{}  The two results, Theorem \ref{thm:Hein} and Corollary \ref{cor:BK}, are
complemented by the following, which is the key to the proof of the main result Theorem
\ref{thm:main}.

\begin{thm}\label{thm:main2}
Assume $|H|$ is a base point free, ample linear system
on a K3 surface $S$ such that the locus of non-reduced curves in $|H|$
has codimension at least three. Then, for a generic
curve $D\in |mH|$, $m\geq 2$, one has $H^0(D,\kt_S|_D)=0$.
\end{thm}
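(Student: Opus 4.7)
The plan is to argue by induction on $m\ge 2$, based on a degeneration of a generic smooth $D\in|mH|$ to a reduced reducible curve $D_0 = C\cup D'$ with $C\in |H|$ and $D'\in |(m-1)H|$ chosen generically. From the inclusion of ideal sheaves $\ko_S(-D_0)\subset \ko_S(-D')\subset \ko_S$ one extracts
$$0\to \ko_C(-D')\to \ko_{D_0}\to \ko_{D'}\to 0,$$
which, after tensoring with the locally free sheaf $\kt_S$, becomes
$$0\to \kt_S|_C(-(m-1)H)\to \kt_S|_{D_0}\to \kt_S|_{D'}\to 0.$$
Its cohomology sequence reads
$$0\to H^0(C,\kt_S|_C(-(m-1)H))\to H^0(D_0,\kt_S|_{D_0})\to H^0(D',\kt_S|_{D'}).$$
By Corollary \ref{cor:Hein}, the restriction $\kt_S|_C$ is $\mu$-semistable of slope $0$ for generic $C$. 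Consequently $\kt_S|_C(-(m-1)H)$ is semistable of strictly negative slope and hence has no global sections, giving the injection $H^0(D_0,\kt_S|_{D_0})\hookrightarrow H^0(D',\kt_S|_{D'})$.

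For the inductive step $m\ge 3$, the inductive hypothesis applied at $m-1\ge 2$ gives $H^0(D',\kt_S|_{D'}) = 0$ for generic $D'\in |(m-1)H|$, and the injection above forces $H^0(D_0,\kt_S|_{D_0}) = 0$. Upper-semicontinuity of $h^0$ on the flat family of curves over $|mH|$ propagates this vanishing to a generic smooth $D\in |mH|$. This part of the argument does not involve the non-reduced-locus hypothesis, and matches the content of the forthcoming Remark~\ref{rem:m=3}.

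The base case $m = 2$ is the heart of the theorem: the degeneration gives only $H^0(D_0,\kt_S|_{D_0})\hookrightarrow H^0(C',\kt_S|_{C'})$, which by Corollary \ref{cor:Hein} is of dimension at most two rather than zero. To sharpen it, I would take $D_0 = C_1\cup C_2$ with $C_1,C_2\in |H|$ generic and use the Mayer--Vietoris sequence
$$0\to H^0(\kt_S|_{D_0})\to H^0(\kt_S|_{C_1})\oplus H^0(\kt_S|_{C_2})\to H^0(\kt_S|_Z),$$
with $Z = C_1\cap C_2$ and the last map given by $(s_1,s_2)\mapsto s_1|_Z-s_2|_Z$. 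By Corollary \ref{cor:Hein}, any non-zero $s_i\in H^0(\kt_S|_{C_i})$ arises from the distinguished inclusion $\ko_{C_i}\hookrightarrow \kt_S|_{C_i}$, is nowhere vanishing, and takes value at each $p\in Z$ in a specific line $L_i(p)\subset \kt_{S,p}$. A non-trivial element of the kernel would force the coincidence $L_1(p) = L_2(p)$ and the equality of the (scalar-normalized) values of $s_1$ and $s_2$ at all $(H.H)$ points of $Z$. The codimension-three hypothesis on the non-reduced locus in $|H|$ is used to provide a two-parameter family of reduced curves in $|H|$ along which $C_1$ can be moved while keeping $C_2$ fixed, so that both $Z$ and the lines $L_1(p)$ vary enough to rule out such a coincidence for a generic pair $(C_1,C_2)$. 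Thus the matching map is injective, $H^0(\kt_S|_{D_0}) = 0$, and semicontinuity concludes the base case.

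The main obstacle is the base case $m=2$. The inductive step is essentially formal once the twisted vanishing coming out of Corollary \ref{cor:Hein} is available, because the twist $-(m-1)H$ for $m-1\ge 1$ immediately wipes out $H^0$ of the semistable piece $\kt_S|_C$. In contrast, for $m = 2$ the a priori bound from the degeneration is only $h^0\le 2$, and one must argue that the values of the distinguished nowhere-vanishing sections of $\kt_S|_{C_i}$ cannot simultaneously coincide at all intersection points for generic choices; the codimension-three hypothesis is exactly what provides the flexibility in $|H|$ needed to close this gap.
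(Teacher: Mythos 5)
Your reduction of $m\geq 3$ to the case $m=2$ is sound and is essentially the paper's own reduction (cf.\ Remark \ref{rem:m=3}): the decomposition sequence for $C\cup D'$ plus the twisted vanishing $H^0(C,\kt_S|_C(-(m-1)H))=0$ from Corollary \ref{cor:Hein} and semicontinuity work exactly as you say. The problem is the base case $m=2$, which you correctly identify as the heart of the matter but do not actually prove. Two concrete issues. First, your structural claim about sections is wrong in one of the two relevant cases: Corollary \ref{cor:Hein} allows $\kt_S|_C\cong\ko_C\oplus\ko_C$, and then the sections of $\kt_S|_C$ do \emph{not} all factor through a distinguished line subbundle; at a point $p\in Z=C_1\cap C_2$ they span the whole fibre $\kt_{S,p}$, so there is no canonical line $L_i(p)$ and the matching condition in your Mayer--Vietoris sequence cannot be analysed the way you describe. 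Second, and more seriously, the step ``the codimension-three hypothesis provides a two-parameter family along which $Z$ and $L_1(p)$ vary enough to rule out the coincidence'' is an assertion, not an argument. Nothing in what you write excludes the a priori possibility that for \emph{every} pair of generic curves the restriction maps to $H^0(\kt_S|_Z)$ agree on a one- (or two-) dimensional subspace; ruling this out is precisely the content of the theorem, and a pointwise/dimension-count heuristic at the $(H.H)$ intersection points does not do it, because the possible coincidences are highly structured (they come from the canonical extensions $0\to\ko_C\to\kt_S|_C\to\ko_C\to0$ varying coherently with $C$).

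For comparison, the paper closes exactly this gap by a global argument rather than a pointwise one: assuming the coincidence persists generically, it organizes the spaces $H^0(C\cup C',\kt_S|_{C\cup C'})$ into a trivial subsheaf of $p_\ast q^\ast\kt_S$ over a pencil, or over a complete integral curve $T\subset|H|$ (inside the locus $Z_{C'}$ where the restriction to $H^0(C',\kt_S|_{C'})$ is surjective, or inside a fibre of the induced map to $\PP(H^0(C',\kt_S|_{C'}))$), and then derives a contradiction with $H^0(S,\kt_S)=0$, respectively with Lemma \ref{lem:pullbackT} for the generically finite surface $\kc_T\to S$. The injectivity of all the relevant restriction maps along such families is supplied by Remark \ref{rem:BM}(ii) (which needs all members reduced and $g>2$), and the codimension-three hypothesis is used not to ``move $C_1$ in a two-parameter family'' but to guarantee that the closure of $Z_{C'}$ meets the non-reduced locus in codimension at least two, so that $Z_{C'}$, respectively the fibre of $\psi$, still contains a \emph{complete} curve $T$ to run the globalization on. If you want to salvage your approach, you need some mechanism of this kind that converts the hypothetical generic coincidence into a global section of $q^\ast\kt_S$ on a surface mapping finitely onto $S$ (or some other contradiction with the stability of $\kt_S$); as written, the base case is a gap.
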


\begin{proof} For the case $g=2$ see Proposition \ref{prop:Yagna}, so we restrict to the
case $g>2$ and hence can make use of Remark \ref{rem:BM}, (ii).
We first show that it suffices to prove the assertion for $m=2$.

Indeed, assume $H^0(D,\kt_S|_D)=0$ for the generic curve $D\in |2H|$. Then, if $m>2$, pick a generic curve
$D'\in |(m-2)H|$ and tensor the exact sequence $$\xymatrix@C=19pt{0\ar[r]&\ko_{D'}(-D)\ar[r]&\ko_{D\cup D'}\ar[r]&\ko_{D}\ar[r]&0}$$ with $\kt_S$. The resulting
long exact cohomology sequence is of the form $$\xymatrix@C=19pt{0\ar[r]&H^0(D',\kt_S|_{D'}(-D))\ar[r]&H^0(D\cup D',\kt_S|_{D\cup D'})\ar[r]&H^0(D,\kt_S|_{D})\ar[r]&\cdots.}$$
By Corollary \ref{cor:Hein}, the restriction $\kt_S|_{D'}$ satisfies $H^0(D',\kt_S|_{D'}(-D))=0$. Combined with the assertion for $m=2$, this yields the vanishing $H^0(D\cup D',\kt_S|_{D\cup D'})=0$ and, by semi-continuity, this proves the assertion for the generic
curve in $|mH|$.

To prove the assertion for $m=2$, let $C,C'\in |H|$ be generic curves.
Again by semi-continuity, it suffices to show
$H^0(C\cup C',\kt_S|_{C\cup C'})=0$.

 If the restriction $\kt_S|_C$ to a generic  $C\in |H|$
satisfies $H^0(C,\kt_S|_C)=0$, then also $H^0(C\cup C',\kt_S|_{C\cup C'})=0$, by the same arguments as above. Otherwise, by Corollary \ref{cor:Hein}, 
there exists a short exact sequence 
\begin{equation}\label{eqn:sesTS}
\xymatrix@C=19pt{0\ar[r]&\ko_{C}\ar[r]&\kt_S|_{C}\ar[r]&\ko_{C}\ar[r]&0.}
\end{equation} This leaves us with the two cases: (i) $h^0(C,\kt_S|_C)=1$,
or equivalently (\ref{eqn:sesTS}) does not split,  or (ii) 
$h^0(C,\kt_S|_C)=2$, i.e.\ $\kt_S|_C\cong\ko_C\oplus\ko_C$, for the generic curve $C\in |H|$.

\smallskip

(i) We assume $h^0(C,\kt_S|_C)=1$ for a generic $C\in |H|$ and suppose
that also $h^0(C\cup C',\kt_S|_{C\cup C'})=1$ for two generic curves
$C,C'\in |H|$. To derive a contradiction we pick a generic pencil $p\colon\kc   \to \PP^1$ in the base point free linear system
$|H|$ and fix a generic curve $C'\in |H|$. By our assumption on the non-reduced locus in $|H|$, all fibres of $p$ are
reduced. The other projection $q\colon \kc\to S$ is the blow-up in the base locus of the pencil.
By Remark \ref{rem:BM}, (ii),  the natural restriction map
$H^0(C\cup C',\kt_S|_{C\cup C'})\,\hookrightarrow H^0(C',\kt_S|_{C'})$ is injective
for all fibres $C=\kc_s$, $s\in \PP^1$, and under our assumption in fact bijective.
Similarly,  also the restriction maps
$H^0(C\cup C',\kt_S|_{C\cup C'})\,\hookrightarrow H^0(C,\kt_S|_{C})$ are injective.
Thus, the bundle with fibres  $H^0(C\cup C',\kt_S|_{C\cup C'})$ is isomorphic
to the trivial line bundle with constant fibre $H^0(C',\kt_S|_{C'})$ and, at the same time,
it is contained
in $p_\ast q^\ast\kt_S$, which generically has fibre $H^0(C,\kt_S|_C)$. Thus,
$\ko_{\PP^1}\subset p_\ast q^\ast \kt_S$, which yields the contradiction 
$k=H^0(\PP^1,\ko_{\PP^1})\subset H^0(\PP^1,p_\ast q^\ast\kt_S)\cong H^0(\kc,q^\ast\kt_S)\cong H^0(S,\kt_S)=0$.

To formalize the argument, let $\tilde p\colon\tilde\kc\to \PP^1$ be the family
obtained by gluing $\kc$ and the constant family $\kc'\coloneqq C'\times\PP^1$ along $C'$. Here, $C'$ is naturally embedded into $\kc$ by viewing
$\kc$ as the blow-up of $S$ and into $\kc'=C'\times\PP^1$ as the graph
of the induced projection $C'\subset\kc\to \PP^1$.  So, if $C$ is the fibre $\kc_s=p^{-1}(s)$,  $s\in\PP^1$, then
 $\tilde p^{-1}(s)\cong C\cup C'$. 
We denote by $\tilde q\colon \tilde \kc\to S$ the second projection. Then the fibres of $E\coloneqq\tilde p_\ast\tilde q^\ast\kt_S$ are the lines
$H^0(C\cup C',\kt_S|_{C\cup C'})$. Moreover, we have a natural
inclusion $E\subset p_\ast q^\ast\kt_S$, which is the direct image of the restriction $\tilde q^\ast \kt_S\to q^\ast\kt_S$ under the inclusion $\kc\subset\tilde\kc$.
Via the restriction to $\kc'\subset\tilde\kc$, the sheaf $E$ can also be identified
with the trivial
bundle $H^0(C',\kt_S|_{C'})\otimes\ko_{\PP^1}\cong\ko_{\PP^1}$.
\smallskip

(ii) Let us now assume that $\kt_S|_C\cong\ko_C\oplus\ko_C$ for the generic curve
$C\in |H|$. As above, we choose a generic pencil $\kc\to\PP^1$
 and let $C'\in |H|$ be a fixed generic curve.
 As before, it is enough to show that $h^0(C\cup C',\kt_S|_{C\cup C'})=0$ for the generic fibre $C=\kc_s$, $s\in \PP^1$. Due to Remark
\ref{rem:BM}, (ii),   
for every fibre $C=\kc_s$ we have natural inclusions $$H^0(C\cup C',\kt_S|_{C\cup C'}) \subset H^0(C,\kt_S|_C)\text{ and
}H^0(C\cup C',\kt_S|_{C\cup C'}) \subset H^0(C',\kt_S|_{C'})\cong k^2.$$ 
Hence, $h^0(C\cup C',\kt_S|_{C\cup C'})\leq2$. Furthermore, if $h^0(C\cup C',\kt_S|_{C\cup C'})=2$
holds generically, then it holds for all fibres $C$. This, however, would
result in the inclusion of the trivial bundle $\ko_{\PP^1}^{\oplus 2}$ with
fibre at the point $s\in \PP^1$ naturally identified with
$H^0(\kc_s\cup C',\kt_S|_{\kc_s\cup C'}) \cong H^0(C',\kt_S|_{C'})\cong k^2$ into the sheaf $p_\ast q^\ast\kt_S$ and eventually lead to a similar contradiction
$k^2\cong H^0(\PP^1,\ko_{\PP^1}^{\oplus 2})\subset H^0(\PP^1,p_\ast q^\ast\kt_S)\cong H^0(\kc,q^\ast\kt_S)\cong H^0(S,\kt_S)=0$.

\smallskip

(iii) It remains to deal with the case $h^0(C,\kt_S|_C)=2$ and
$h^0(C\cup C',\kt_S|_{C\cup C'})=1$ for generic curves $C,C'\in |H|$.
Here, we use the assumption that the
non-reduced curves in $|H|$ form a closed set of codimension at least three.
We denote its  open complement of all reduced curves by $U\coloneqq |H|_{\rm red}\subset |H|$.

Now, under our assumptions, we can consider the rational map $$\xymatrix{\psi\colon|H|\ar@{-->}[r]&\PP(H^0(C',\kt_S|_{C'})),} \xymatrix@C=15pt{C\ar@{|->}[r]& H^0(C\cup C',\kt_S|_{C\cup C'}).}$$ 
The map is regular on the complement $U\setminus Z_{C'}$ of the proper, closed subset $Z_{C'}\subset U$
of all curves $C\in U$ with $H^0(C\cup C',\kt_S|_{C\cup C'})\to H^0(C',\kt_S|_{C'})$
surjective (and hence bijective by Remark \ref{rem:BM}, (ii)). We proceed by distinguishing two cases:
Either, ${\rm codim} (Z_{C'})\geq 2$ or ${\rm codim} (Z_{C'})=1$.
In the first case, the generic fibre $\psi^{-1}(\ell)$ of $\psi\colon U\setminus Z_{C'}\to \PP(H^0(C',\kt_S|_{C'}))$ contains a complete, integral curve, say $T\subset \psi^{-1}(\ell)$. We then consider
the restriction $\kc_T$ of the universal curve to $T$ and its two projections
$\xymatrix@C=18pt{T&\ar[l]_-p\kc_T\ar[r]^-q&S.}$
As in step (ii),  $E$ on $T$ shall denote the bundle with fibre $H^0(\kc_t\cup C',\kt_S|_{\kc_t\cup C'})$ over the point $t\in T$. By our choice of $T$, this bundle is isomorphic
to $\ell\otimes\ko_T$ and hence trivial. On the other hand, it is a subsheaf
of $p_\ast q^\ast\kt_S$ and, therefore, $H^0(\kc_T,q^\ast\kt_S)\ne0$. In (ii), the total
space of the one-dimensional family was the blow-up of $S$ which was enough to
derive the contradiction $H^0(S,\kt_S)\ne0$. The same argument works here
due to Lemma \ref{lem:pullbackT} below.

To conclude we have to deal with the case that ${\rm codim}(Z_{C'})=1$. In this
case, $Z_{C'}$ itself contains a complete, integral curve $T\subset Z_{C'}$,
simply because the boundary $\bar Z_{C'}\setminus Z_{C'}$ of the closure
$\bar Z_{C'}\subset |H|$ is contained in $|H|\setminus |H|_{\rm red}$, which by assumption is of dimension at most $g-3$ and hence ${\rm codim}_{\bar Z_{C'}} (\bar Z_{C'}\setminus Z_{C'})\geq 2$.
 For the
corresponding family $\kc_T$ as above, the bundle with fibres $H^0(\kc_t\cup C',\kt_S|_{\kc_t\cup C'})\cong H^0(C',\kt_S|_{C'})$ is trivial. Hence, $p_\ast q^\ast\kt_S$ contains
$\ko_T^{\oplus 2}$ which again yields a contradiction to $H^0(S,\kt_S)=0$.
\end{proof}

\begin{lem}\label{lem:pullbackT}
Assume $q\colon S'\to S$ is a generically finite morphism
from a normal projective irreducible surface $S'$ onto a K3 surface $S$.
Then $H^0(S',q^\ast\kt_S)=0$.
\end{lem}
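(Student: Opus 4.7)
I plan to proceed in three stages: reducing to a tractable geometric setup, translating the cohomology via the symplectic structure on $S$ and the trace splitting of $q_\ast\ko_{S'}$, and finally identifying the maximal destabilizing subsheaf of $q_\ast\ko_{S'}$ by slope estimates together with the simple-connectedness of $S$.

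First, passing to a resolution of singularities $\pi\colon\tilde{S}'\to S'$ and using $\pi_\ast\ko_{\tilde{S}'}=\ko_{S'}$ (by normality of $S'$), the pullback $H^0(S',q^\ast\kt_S)\hookrightarrow H^0(\tilde{S}',(q\pi)^\ast\kt_S)$ is injective, so we may assume $S'$ is smooth; replacing $S'$ by the finite part of the Stein factorization of $q$ then allows us to take $q$ finite, and hence finite flat as $S$ is smooth. The projection formula, the characteristic-zero trace splitting $q_\ast\ko_{S'}=\ko_S\oplus F$, the symplectic isomorphism $\kt_S\cong\Omega_S$, and the vanishing $H^0(S,\kt_S)=0$ together reduce the statement to
$$H^0(S',q^\ast\kt_S)=H^0(S,\kt_S\otimes F)=\Hom_S(\Omega_S,F)=0.$$

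Next I would fix an ample $H$ on $S$ and argue that the maximal Harder--Narasimhan slope $\mu_{\max,H}(q_\ast\ko_{S'})$ vanishes. The inclusion $\ko_S\subset q_\ast\ko_{S'}$ shows $\mu_{\max,H}\geq 0$; conversely any $\mu_H$-semistable subsheaf $G\subset q_\ast\ko_{S'}$ of positive slope corresponds by adjunction to a global section of $q^\ast G^\vee$, which is $\mu_{q^\ast H}$-semistable on $S'$ of strictly negative slope and hence admits no sections. Let $M$ denote the saturated maximal destabilizing subsheaf, which is therefore $\mu_H$-semistable of slope zero and contains $\ko_S$. The semistability of tensor products of torsion-free $\mu$-semistable sheaves in characteristic zero implies that the image of the algebra multiplication $M\otimes M\to q_\ast\ko_{S'}$ is semistable of slope zero, hence contained in $M$; similarly $M$ is integrally closed in its fraction field. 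Therefore $M$ is a normal $\ko_S$-subalgebra and $T\coloneqq\Spec_S M\to S$ is a finite normal cover with $(q_T)_\ast\ko_T=M$ of slope zero. The Hurwitz formula $c_1(M)=-\tfrac12(q_T)_\ast R_T$ then forces $R_T=0$, so $q_T$ is étale; combined with $H^0(q_\ast\ko_{S'})=k$ (since $S'$ is irreducible) and the simple-connectedness of the K3 surface $S$, this yields $T=S$ and $M=\ko_S$.

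Consequently, every Harder--Narasimhan slope of $F=q_\ast\ko_{S'}/\ko_S$ is strictly negative. Since $\Omega_S$ is $\mu_H$-stable of slope zero on the K3 surface, any nonzero map $\Omega_S\to F$ would have image either of rank one with slope $>0$ or of rank two equal to $\Omega_S$ with slope $0$, and both possibilities are excluded by $\mu_{\max,H}(F)<0$. Hence $\Hom_S(\Omega_S,F)=0$ and the lemma follows. I expect the main obstacle to be the middle step, specifically the identification of $M$ with $\ko_S$: it relies on the semistability of tensor products in characteristic zero, the algebra structure of $q_\ast\ko_{S'}$, and the simple-connectedness of the K3 surface $S$.
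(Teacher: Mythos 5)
Your argument is correct, but it takes a genuinely different route from the paper's. The paper stays on $S'$: after the same Stein-factorisation reduction it quotes the fact that the pull-back of the $\mu$-stable bundle $\kt_S$ under a finite morphism is $\mu$-polystable, cf.\ \cite[Lem.\ 3.2.3]{HL}, and then a short case analysis finishes the proof --- either $q^\ast\kt_S$ is stable (no sections), or it is $L\oplus L^\ast$ with $L$ non-trivial (no sections), or it is $\ko_{S'}^{\oplus2}$, which is excluded by $c_2(q^\ast\kt_S)=q^\ast c_2(\kt_S)\neq0$. You instead push forward: the projection formula and the trace splitting reduce the claim to $\Hom_S(\Omega_S,q_\ast\ko_{S'}/\ko_S)=0$, and you identify the maximal destabilising subsheaf of $q_\ast\ko_{S'}$ with $\ko_S$ by combining the algebra structure (via semistability of tensor products in characteristic zero), the discriminant formula, and the simple connectedness of the K3 surface, so that stability of $\Omega_S$ concludes. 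This costs noticeably more machinery: you need semistability of pull-backs and of tensor products, the integral-closure discussion to make $T=\Spec_S M$ normal, and at the step ``$R_T=0$, so $q_T$ is \'etale'' you should also invoke purity of the branch locus (Zariski--Nagata), since $T$ is a priori only normal and the vanishing of the discriminant divisor only controls ramification in codimension one; note also that after replacing $S'$ by the finite part of the Stein factorisation the surface is again only normal, so flatness of $q$ uses that normal surfaces are Cohen--Macaulay (your earlier resolution step is then superfluous). What your route buys is the stronger intermediate statement $\mu_{\max}(q_\ast\ko_{S'}/\ko_S)<0$, i.e.\ a slope-theoretic form of the fact that $q$ factors through no non-trivial \'etale cover; it replaces the input $c_2(\kt_S)\neq0$ by $\pi_1(S)=1$ and avoids citing polystability of pull-backs, whereas the paper's argument is considerably shorter.
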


\begin{proof} Using Stein factorization, we can reduce to the case that $q$ is finite.
Then use that for a finite morphism $q\colon S'\to S$ the pull-back $q^\ast\kt_S$
is $\mu$-polystable, cf.\ \cite[Lem.\ 3.2.3]{HL}. Clearly, if $q^\ast\kt_S$ is $\mu$-stable,
then $H^0(S',q^\ast\kt_S)=0$. Otherwise, $q^\ast\kt_S$
is isomorphic to a sum of invertible sheaves, say $q^\ast\kt_S\cong L_1\oplus L_2$,
which satisfy $L_1\cong L_2^\ast$. There are two cases: Either, $L_1$ and $L_2$
are not trivial, in which case $H^0(S',L_i)=0$ and hence $H^0(S',q^\ast \kt_S)=0$, or
$L_1\cong L_2\cong\ko_{S'}$, which contradicts ${\rm c}_2(q^\ast\kt_S)=q^\ast {\rm c}_2(\kt_S)\ne0$.
\end{proof}

\begin{remark}\label{rem:m=3}
Without the assumption on the reduced locus $|H|_{\rm red}\subset|H|$ the arguments 
in the proof show that 
$h^0(C, T_S|_C) \leq 1$ for generic $C\in |2H|$. This renders case (iii), which is where we used the assumption, in the proof unnecessary. Running the other two cases again for $C\in |2H|$ and $C'\in |H|$, we obtain $H^0(D, \kt_S|_D) = 0$ for $D = C\cup C'$ and hence also for generic curves $D\in |3H|$.
\end{remark}

\section{Maximal variation}\label{sec:maxvar}

We study the family $\kc\to|H|$ given by the universal curve over a base point
free linear system $|H|$ on a K3 surface $S$. We denote by
 $|H|_{\rm sm}\subset|H|$ the subset of all smooth curves, which
 is  Zariski open and dense.
 
Mapping a point $s\in |H|$  corresponding to a smooth curve $C\coloneqq\kc_s$ to the corresponding point $[C]\in M_g$ in
the  Deligne--Mumford stack of smooth curves of genus $g$, where $2g-2=(H.H)^2=d$,
describes a rational map
$$\xymatrix{\Phi\colon|H|\ar@{-->}[r]&M_g}.$$
The map is regular on the open subset $|H|_{\rm sm}\subset |H|$ of all smooth curves. The curves in $|H|$ have \emph{maximal variation} if this map is generically quasi-finite or, equivalently, if its image is of dimension $g=\dim|H|$.

\subsection{}\label{sec:fibredim}
Consider a smooth fibre $C\coloneqq \kc_s$,  $s\in |H|_{\rm sm}$.
The tangent spaces of $|H|$ at $s$ and of $M_g$ at $[C]=\Phi(s)$ 
are naturally isomorphic to $H^0(C,\omega_C)\cong H^0(C,H|_C)$
and  $H^1(C,\kt_C)$. The differential $d\Phi$ at the point $s$ is the boundary
map $$d\Phi\colon H^0(C,H|_C)\to H^1(C,\kt_C)$$ of the long exact cohomology sequence
associated with the normal bundle sequence \begin{equation}\label{eqn:normses}
\xymatrix@C=18pt{0\ar[r]&\kt_C\ar[r]&\kt_S|_C\ar[r]&\omega_C\cong H|_C\ar[r]&0.}
\end{equation}

In particular,  using $H^0(C,\kt_C)=0$, the relative Zariski tangent  space $\kt_\Phi(s)$ of $\Phi$ at the point
$s\in |H|_{\rm sm}$ is the vector space: $$\kt_\Phi(s)\cong H^0(C,\kt_S|_C).$$

\begin{lem}\label{lem:tangent}
The rational map $\xymatrix{\Phi\colon |H|\ar@{-->}[r]&M_g}$ is generically quasi-finite if and only if $H^0(C,\kt_S|_C)=0$ for the generic curve $C\in|H|$.
More generally, the dimension of the generic fibre of $\Phi$ is bounded from above by $h^0(C,\kt_S|_C)$ for any (generic) smooth curve $C\in|H|$.
\end{lem}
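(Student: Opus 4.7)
The plan is to read off the lemma from the infinitesimal description of $\Phi$ that has just been assembled, combined with two standard ingredients: generic smoothness in characteristic zero and upper semicontinuity of $h^0$.

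First I would observe that the computation preceding the lemma already gives the key identification. Writing out the long exact cohomology sequence of (\ref{eqn:normses}) and using $H^0(C,\kt_C)=0$ (which holds since $g\geq 2$, the case $g=0,1$ being vacuous for K3 linear systems of this form), we have an exact sequence
$$\xymatrix@C=14pt{0\ar[r]&H^0(C,\kt_S|_C)\ar[r]&H^0(C,H|_C)\ar[r]^-{d\Phi_s}&H^1(C,\kt_C),}$$
so $\ker(d\Phi_s)\cong H^0(C,\kt_S|_C)$ for every $s\in|H|_{\rm sm}$ with fibre $C=\kc_s$. This is precisely the identification of the relative Zariski tangent space $\kt_\Phi(s)$ already recorded.

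Next I would translate this infinitesimal statement into the claimed dimension bound. Since $|H|_{\rm sm}$ is irreducible and we are working over a field of characteristic zero, generic smoothness applies: the rank of $d\Phi$ is locally constant on a dense open $V\subset|H|_{\rm sm}$, equal to $\dim\overline{\Phi(|H|_{\rm sm})}$. Consequently, for every $s\in V$ the fibre $\Phi^{-1}(\Phi(s))$ is of pure dimension $\dim|H|-\dim\overline{\Phi(|H|_{\rm sm})}=\dim\ker(d\Phi_s)=h^0(\kc_s,\kt_S|_{\kc_s})$. Thus the generic fibre dimension of $\Phi$ equals $h^0(C,\kt_S|_C)$ for a generic smooth curve $C$; in particular $\Phi$ is generically quasi-finite if and only if this number vanishes for generic $C$, which is the first assertion.

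Finally, to upgrade the bound so that $C$ may be taken to be any (generic) smooth curve, I would invoke upper semicontinuity of $s\mapsto h^0(\kc_s,\kt_S|_{\kc_s})$ on $|H|_{\rm sm}$ (standard, e.g.\ from cohomology and base change applied to the universal family $q\colon\kc\to|H|$). Upper semicontinuity forces the minimum of $h^0$ on $|H|_{\rm sm}$ to be achieved on a dense open, so the generic value is bounded by $h^0(C,\kt_S|_C)$ for every smooth $C\in|H|$. Combined with the preceding paragraph, this gives the generic fibre dimension bound stated in the lemma. There is no essential obstacle: the whole argument is bookkeeping around the exact sequence (\ref{eqn:normses}) and standard facts, with the only mild subtlety being the use of characteristic zero to ensure that the generic rank of $d\Phi$ really computes $\dim\overline{\Phi(|H|_{\rm sm})}$.
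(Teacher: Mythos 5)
Your proposal is correct and follows essentially the same route as the paper: identify $\ker(d\Phi_s)\cong H^0(C,\kt_S|_C)$ via the normal bundle sequence and $H^0(C,\kt_C)=0$, then use characteristic zero (generic smoothness) to equate the generic fibre dimension with the generic kernel dimension of $d\Phi$. Your explicit appeal to upper semicontinuity of $h^0$ to justify the bound for an arbitrary smooth curve is a slightly more detailed write-up of what the paper's terse proof leaves implicit, not a different argument.
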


\begin{proof}
For a smooth curve $C=\kc_s$, $s\in |H|$, 
 the differential $d\Phi\colon H^0(C,H|_C)\to H^1(C,\kt_C)$,
 is injective if and only if $H^0(C,\kt_S|_C)=0$. 
More generally, as we work in characteristic is zero, the dimension of a generic fibre $\Phi^{-1}([C])$ is the dimension of ${\rm Ker}(d\Phi)$ at the generic point of that fibre. In particular, the generic fibre is zero-dimensional if and only if $H^0(C,\kt_S|_C)=0$ for the generic curve $C=\kc_s$.
%
\end{proof}

\begin{cor}\label{cor:dim2}
Assume $|H|$ is a base point free, ample linear system on a K3 surface $S$. Then the generic fibre of 
the rational map $\xymatrix{\Phi\colon |H|\ar@{-->}[r]&M_g}$ is of dimension
at most two. Equivalently,
$$\dim{\rm Im}(\Phi)\geq g-2.$$
\end{cor}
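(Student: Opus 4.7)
The plan is to assemble the two preceding results. By Lemma \ref{lem:tangent}, the dimension of the generic fibre of $\Phi$ is bounded from above by $h^0(C,\kt_S|_C)$ for a generic smooth curve $C\in|H|$. By Corollary \ref{cor:Hein}, under the hypothesis that $|H|$ is base point free and ample, one has $h^0(C,\kt_S|_C)\leq 2$ for the generic $C\in|H|$. Combining these two bounds immediately yields that the generic fibre of $\Phi$ has dimension at most two.

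For the equivalent formulation, I would invoke the fibre dimension theorem: since $\Phi$ is defined on the dense open $|H|_{\rm sm}\subset|H|$ of dimension $g$, and the generic fibre has dimension at most $2$, the image has dimension at least $g-2$.

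There is essentially no obstacle here; the proof is a two-line deduction from the results of Section \ref{sec:HBK} and the infinitesimal description of $\Phi$. The substantive content is entirely contained in Corollary \ref{cor:Hein} (and hence in Hein's Theorem \ref{thm:Hein} together with the analysis of the hyperelliptic case via the ramification curve of $S\to\bar S$). The remaining work to get from $\dim\leq 2$ to $\dim=0$, i.e.\ from Corollary \ref{cor:dim2} to maximal variation, is where Theorem \ref{thm:main2} and the codimension-three hypothesis on the non-reduced locus will enter in Section \ref{sec:proof}.
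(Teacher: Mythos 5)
Your proof is correct and follows exactly the paper's argument: Corollary \ref{cor:dim2} is deduced by combining the fibre-dimension bound of Lemma \ref{lem:tangent} with the bound $h^0(C,\kt_S|_C)\leq 2$ from Corollary \ref{cor:Hein}. The concluding remark about the image dimension via the fibre dimension theorem is also the intended reading of the equivalent formulation.
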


\begin{proof}
This is a direct consequence of Corollary \ref{cor:Hein} and Lemma \ref{lem:tangent}.
\end{proof}

In particular, the family of smooth curves in $|H|$ can never be isotrivial for $g>2$. For
$7\leq g\leq 23$ this can also be deduced from \cite{Benzo}.

As another immediate consequence of the description of the relative Zariski tangent space, we state the following result which is strengthened by the main result Theorem \ref{thm:main}.

\begin{cor}\label{cor:BKcons}
Assume $H$ is a base point free, ample, non-hyperelliptic linear system on a K3 surface. Then the rational
map $$\Phi_m\colon |mH|_{\rm sm}\to M_{g_m},$$
$g_m\coloneqq\dim |mH|$, is generically quasi-finite for all $m\geq 6$. Equivalently, $$\dim{\rm Im}(\Phi_m)=g_m=\frac{m^2}{2}\cdot(H.H)+1.$$
\end{cor}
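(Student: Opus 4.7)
The plan is to combine two results already established earlier in the paper: Corollary \ref{cor:BK}, which gives the stability of the restricted tangent bundle $\kt_S|_D$ for generic $D \in |mH|$ with $m \geq 6$, and Lemma \ref{lem:tangent}, which translates the vanishing $H^0(D,\kt_S|_D)=0$ into generic quasi-finiteness of the modular map. A short Riemann--Roch computation then converts $\dim|mH|$ into the explicit formula in the statement.

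The first step is to invoke Corollary \ref{cor:BK}: since by assumption $|H|$ is base point free, ample, and non-hyperelliptic, for every $m\geq 6$ the restriction $\kt_S|_D$ to a generic curve $D\in|mH|$ is $\mu$-stable, and in particular $H^0(D,\kt_S|_D)=0$. Note that $|mH|_{\rm sm}$ is non-empty by Bertini, as $mH$ is base point free and ample. The second step is to apply Lemma \ref{lem:tangent} to the linear system $|mH|$: the above vanishing is equivalent to injectivity of the differential $d\Phi_m$ at such a $D$, and hence to $\Phi_m\colon |mH|_{\rm sm}\to M_{g_m}$ being generically quasi-finite. This immediately gives $\dim{\rm Im}(\Phi_m)=\dim|mH|=g_m$.

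The final step is to make $g_m=\dim|mH|$ explicit. Since $mH$ is ample on the K3 surface $S$, Kodaira vanishing yields $h^i(S,mH)=0$ for $i\geq 1$, and together with Riemann--Roch and $\chi(\ko_S)=2$ this gives
\[
h^0(S,mH)=\chi(S,mH)=2+\frac{(mH)^2}{2}=2+\frac{m^2(H.H)}{2},
\]
so $g_m=h^0(S,mH)-1=\tfrac{m^2}{2}(H.H)+1$, as claimed. There is no real obstacle in this argument; the corollary is a direct packaging of Corollary \ref{cor:BK} and Lemma \ref{lem:tangent}, augmented by the standard K3 Riemann--Roch computation.
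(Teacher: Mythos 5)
Your proposal is correct and follows exactly the paper's own route: the paper also deduces the corollary directly from Corollary \ref{cor:BK} (stability, hence $H^0(D,\kt_S|_D)=0$ for generic $D\in|mH|$, $m\geq 6$) combined with the tangent-space description in Lemma \ref{lem:tangent}. Your added Riemann--Roch computation of $g_m$ is the standard K3 calculation implicit in the statement and is also correct.
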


\begin{proof}
For $m\geq 6$ this follows from the result of Balaji--Koll\'ar, see Corollary \ref{cor:BK}.
\end{proof}

The result holds more generally for $m \geq 5$ and without the non-hyperelliptic assumption due to  Bott vanishing \cite[Lem.\ 3.7 \&  Rem.\ 3.8]{Totaro}. We discuss the connection of this vanishing to the vanishing of $H^0(C,\kt_S|_C)$ in Section  \ref{sec:CompBott}.

\smallskip

The next result is proved by techniques similar to the ones in the proof of Theorem \ref{thm:main2}. It can be seen as the analogue of \cite[Lem.\ 3.5]{Totaro}, see Remark \ref{rem:Totaro}.

\begin{lem}\label{lem:vanishmult}
Assume $|H|$ is a base point free, ample linear system such that $H^0(C,\kt_S|_C)=0$
for the generic curve $C\in |H|$. Then $H^0(D,\kt_S|_D)=0$ for the generic
curve $D\in |mH|$ for every $m\geq 1$.
\end{lem}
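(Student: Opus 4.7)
I would proceed by induction on $m$, reproducing the two-term degeneration that opens the proof of Theorem~\ref{thm:main2}. The base case $m=1$ is the hypothesis, so assume $m\geq 2$ and the statement for $m-1$. Fix a generic $C\in|H|$, so that $H^0(C,\kt_S|_C)=0$ by hypothesis and also $H^0(C,\kt_S|_C(-H))=0$ by Corollary~\ref{cor:Hein}, and fix a generic $D'\in|(m-1)H|$ transverse to $C$, so that $H^0(D',\kt_S|_{D'})=0$ by the inductive hypothesis. Set $D\coloneqq C\cup D'\in|mH|$.

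Twisting the standard short exact sequence
\begin{equation*}
\xymatrix@C=19pt{0\ar[r]&\ko_C(-D')\ar[r]&\ko_D\ar[r]&\ko_{D'}\ar[r]&0}
\end{equation*}
by $\kt_S$ yields the long exact cohomology fragment
\begin{equation*}
\xymatrix@C=15pt{0\ar[r]&H^0(C,\kt_S|_C(-D'))\ar[r]&H^0(D,\kt_S|_D)\ar[r]&H^0(D',\kt_S|_{D'}).}
\end{equation*}
The rightmost term vanishes by induction. For the leftmost term I would observe that $(m-2)H|_C$ is an effective divisor class on $C$; multiplication by a section cutting out a representative embeds $\ko_C(-D')=\ko_C(-(m-1)H|_C)$ into $\ko_C(-H|_C)$, so that $H^0(C,\kt_S|_C(-D'))$ is contained in $H^0(C,\kt_S|_C(-H))=0$. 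Hence $H^0(D,\kt_S|_D)=0$ for the reducible nodal curve $D=C\cup D'$.

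To finish, I would smooth $D$ inside $|mH|$: as the universal family over $|mH|$ is flat and its smooth locus is dense, upper semi-continuity of $h^0$ in this family gives $H^0(D,\kt_S|_D)=0$ for the generic smooth $D\in|mH|$. No serious obstacle arises beyond what was already handled in Theorem~\ref{thm:main2}: the content is the two-term degeneration combined with the vanishing recorded in Corollary~\ref{cor:Hein}. In contrast with Theorem~\ref{thm:main2}, no hypothesis on the non-reduced locus of $|H|$ is required here, since the assumption $H^0(C,\kt_S|_C)=0$ already excludes the semistable cases (i)--(iii) that forced the extra assumption there.
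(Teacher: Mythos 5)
Your proof is correct, but it uses a different degeneration than the paper's own proof of this lemma. The paper takes the non-reduced member $D=mC\in|mH|$ built from a single generic $C\in|H|$ (spelled out for $m=2$ as $D=2C$), twists the decomposition sequence $0\to\ko_C(-C)\to\ko_{2C}\to\ko_C\to0$ by $\kt_S$, and observes that $H^0(C,\kt_S|_C(-C))$ vanishes simply because it injects into $H^0(C,\kt_S|_C)=0$; semicontinuity in the flat universal divisor then concludes, exactly as in your last step. The paper's choice buys economy: one generic curve, no induction, and no appeal to Corollary~\ref{cor:Hein} --- indeed your invocation of that corollary is superfluous, since $H^0(C,\kt_S|_C(-H))\subseteq H^0(C,\kt_S|_C)=0$ already follows from the hypothesis by multiplying with a nonzero section of the effective class $H|_C$, the same effectivity trick you use to pass from $-(m-1)H|_C$ to $-H|_C$. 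What your route buys is that it stays within reduced (nodal) members $C\cup D'$ and mirrors the gluing step of Theorem~\ref{thm:main2}, so semicontinuity is applied at a reduced point of $|mH|$; this is pleasant but not logically necessary, as the universal divisor is flat over all of $|mH|$ regardless of whether the special member is reduced. Your closing remark is also accurate: no hypothesis on the non-reduced locus is needed here, because the vanishing $H^0(C,\kt_S|_C)=0$, which Theorem~\ref{thm:main2} must work to establish, is taken as an assumption.
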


\begin{proof}
We explain the case $m=2$. The general case is similar. Pick a curve $C\in |H|$ with $H^0(C,\kt_S|_C)=0$ and consider
the non-reduced curve $D=2C\in |2H|$. Twisting the short exact sequence
$\xymatrix@C=19pt{0\ar[r]&\ko_C(-C)\ar[r]&\ko_D\ar[r]&\ko_C\ar[r]&0}$
with the locally free sheaf $\kt_S$, we obtain an exact sequence
$\xymatrix@C=19pt{0\ar[r]&H^0(C,\kt_S|_C(-C))\ar[r]&H^0(D,\kt_S|_D)\ar[r]&H^0(C,\kt_S|_C)\ar[r]&\cdots}$. Since $H^0(C,\kt_S|_C)$ and hence $H^0(C,\kt_S|_C(-C))$
are both trivial, the vanishing of $H^0(D,\kt_S|_D)$ follows.
By semi-continuity, the vanishing then holds for the generic $C\in |2H|$.
\end{proof}

\subsection{}\label{sec:proof} We now come to the proof of Theorem \ref{thm:main}, 
which, having set up all the necessary machinery, is now almost immediate.

Assume $|H|$ is a base point free, ample linear system  with a non-reduced locus of codimension at least three. Then, by virtue of Theorem \ref{thm:main2}, we know that $H^0(D,\kt_S|_D)=0$ for a generic curve $D\in |mH|$, $m\geq 2$. Applying Lemma \ref{lem:tangent}
concludes the proof. \qed

\begin{remark}
Note that by virtue of Remark \ref{rem:m=3}, maximal variation holds for $|mH|$, $m\geq 3$,
without any assumption on the non-reduced locus. Also note that the assumption on the non-reduced locus in $|H|$ is a Zariski open condition for polarized K3 surfaces $(S,H)$.
It always holds for polarized K3 surfaces with  $\Pic(S)=\ZZ\cdot H$.
\end{remark}

\begin{ex} The two assumptions on the linear system $|H|$ to be base point free and ample are both necessary.

(i) Let $\pi\colon S\to \PP^1$ be an isotrivial elliptic K3 surface, then
$H=\pi^\ast \ko(1)$ is base point free but not ample. In this case,  none of the linear systems $|mH|$, $m\geq 1$, has maximal variations.
\smallskip

(ii) If $|H|$ is ample but not base point free, then $H\cong\ko((g-1)E+C)$.
Here, $E$ is a smooth elliptic curve and $C\cong\PP^1$ with $(C.E)=1$, see \cite[Cor.\ 2.3.15]{HuyK3}. The general member in $|H|$ is a curve of the form 
$E_1+\cdots+E_{g-1}+C$. For example, for $g=4$ this provides a family of dimension at most three of abstract curves in the four-dimensional linear system $|H|$.
\end{ex}

\begin{ex}\label{ex:Fermat}
Similarly, the conclusion of Theorem \ref{thm:main} cannot be strengthened.
In general, neither is the map $|H|_{\rm sm}\to M_g$ quasi-finite nor generically injective.
For a concrete example, consider the Fermat quartic $S\subset\PP^3$ given by the equation $x_0^4+\cdots+x_3^4=0$. Then the hyperplane sections
$x_0=t x_1$ describe a one-dimensional isotrivial family of smooth curves in $|\ko(1)|$.
Due to the non-trivial automorphism group of $(S,\ko_S(1))$, namely
$\Aut(S,\ko_S(1))=\mu_4^4/\mu_4\rtimes {\mathfrak S}_4$, the map
$|\ko_S(1)|_{\rm sm}\to M_3$ cannot be generically injective.\end{ex}

\subsection{}\label{sec:CompBott}
Let us briefly spell out the relation between the vanishing
of $H^0(C,\kt_S|_C)$ and that of $H^1(S,\kt_S\otimes H^\ast)$.
Note that while the former is the relative tangent space of the map $\xymatrix{\Phi\colon|H|\ar@{-->}[r]& M_g}$ 
at $C\in |H|$ the latter is the relative tangent space of the map $$\tilde\Phi\colon(C\subset S)\mapsto C$$ with varying K3 surface $S$, cf.\ \cite[Sec.\ 5.2]{BeauvilleFano}.

From  $\xymatrix@C=18pt{0\ar[r]&\kt_S\otimes H^\ast\ar[r]&\kt_S\ar[r]&\kt_S|_C\ar[r]&0}$ and  $H^0(S,\kt_S)=0= H^2(S,\kt_S)$ we obtain the long exact cohomology sequence
$$\xymatrix@C=17pt{0\ar[r]&H^0(\kt_S|_C)\ar[r]&H^1(\kt_S\otimes H^\ast)\ar[r]&H^1(\kt_S)\ar[r]&H^1(\kt_S|_C)\ar[r]&H^2(\kt_S\otimes H^\ast)\ar[r]&0.}$$ 

Hence, if $H^1(S,\kt_S\otimes H^\ast)=0$, then also $H^0(C,\kt_S|_C)=0$, but the converse need not hold. This immediately leads to the following observation.

\begin{lem}\label{lem:BVqf}
Assume $H^1(S,\kt_S\otimes H^\ast)=0$. Then the morphism $|H|_{\rm sm}\to M_g$ is
quasi-finite.
\end{lem}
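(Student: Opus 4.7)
The plan is that the lemma is essentially a direct corollary of the long exact cohomology sequence displayed immediately above its statement; what remains is only to lift the conclusion from the generic curve to every smooth curve. First, I would observe that for every $s\in|H|_{\rm sm}$ with $C=\kc_s$, the short exact sequence $0\to\kt_S\otimes H^\ast\to\kt_S\to\kt_S|_C\to 0$ and the associated long exact cohomology sequence make sense unchanged. Combined with $H^0(S,\kt_S)=0$ and the standing hypothesis $H^1(S,\kt_S\otimes H^\ast)=0$, that sequence immediately forces $H^0(C,\kt_S|_C)=0$ for \emph{every} smooth curve $C\in|H|$, not only for a generic one.

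Next, I would invoke the identification $\kt_\Phi(s)\cong H^0(C,\kt_S|_C)$ of the relative Zariski tangent space recorded in Section \ref{sec:fibredim}. This says that the differential $d\Phi_s\colon H^0(C,H|_C)\to H^1(C,\kt_C)$ is injective at every $s\in|H|_{\rm sm}$. In characteristic zero, injectivity of the differential at a point $s$ forces the scheme-theoretic fibre of $\Phi|_{|H|_{\rm sm}}$ through $s$ to have trivial Zariski tangent space at $s$, hence to be zero-dimensional at $s$; since this holds for every $s\in|H|_{\rm sm}$, all fibres of $\Phi|_{|H|_{\rm sm}}\colon|H|_{\rm sm}\to M_g$ are finite, which is precisely quasi-finiteness.

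There is no substantive technical obstacle here. The only step worth emphasising is the passage from the \emph{generic} version stated in Lemma \ref{lem:tangent} to a \emph{pointwise} version, and this upgrade is automatic because the vanishing hypothesis $H^1(S,\kt_S\otimes H^\ast)=0$ is a global condition on $(S,H)$ that is blind to the choice of curve $C\in|H|$ and therefore controls $h^0(C,\kt_S|_C)$ uniformly at every smooth member of $|H|$.
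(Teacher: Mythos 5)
Your argument is correct and is essentially the paper's own proof: both note that the hypothesis $H^1(S,\kt_S\otimes H^\ast)=0$ is independent of the curve, so the long exact sequence together with $H^0(S,\kt_S)=0$ gives $H^0(C,\kt_S|_C)=0$ for \emph{every} smooth $C\in|H|$, whence $\Phi$ is immersive on all of $|H|_{\rm sm}$ and therefore quasi-finite. (The appeal to characteristic zero is not even needed for this pointwise step, since the Zariski tangent space of the fibre at $s$ always lies in $\ker d\Phi_s$.)
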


\begin{proof}
Note that the vanishing only depends on the linear system and not on the individual curve $C\in |H|$. It implies $H^0(C,\kt_S|_C)=0$ for all  curves
$C\in |H|_{\rm sm}$. Therefore, $\Phi\colon |H|_{\rm sm}\to M_g$ is immersive and, hence, quasi-finite.
\end{proof}

\begin{remark}\label{rem:Totaro}
(i) Totaro \cite{Totaro} calls $H^1(S,\kt_S\otimes H^\ast)=0$,
or dually $H^1(S,\Omega_S\otimes H)=0$,  \emph{Bott vanishing}.  If $|H|$ is base point free and ample, then Bott vanishing
for $H$ implies Bott vanishing
for all multiples $mH$, $m\geq 1$, which in particular implies $H^0(D,\kt_S|_D)=0$ for every curve $D\in |mH|$. In this sense, Lemma \ref{lem:vanishmult}
really is the analogue of \cite[Lem.\ 3.5]{Totaro}.

 Bott vanishing holds if the following two conditions
are satisfied: $(H.H)\geq 74$ and there is no curve
$E$ with $(E.E)=0$ and  $1\leq (H.E)\leq 4$, see \cite[Thm.\ 5.1]{Totaro}.
For example, these assumptions are met for the linear system $|mH|$ if $m\geq 5$ and $(H.H)\geq 4$. 
 Note that there exist examples where Bott vanishing fails for arbitrary large $(H.H)$, 
  see \cite[Sec.\ 6]{Totaro}.
\smallskip

(ii) Most of the results in \cite{Totaro} concern K3 surfaces of Picard number one.
For example, under this assumption, \cite[Thm.\ 3.3]{Totaro} asserts Bott vanishing
$H^1(S,\kt_S\otimes H^\ast)=0$ for $(H.H)=20$ or $(H.H)\geq 24$. In particular, under these assumptions also the primitive linear system $|H|$ 
and hence all its multiples $|mH|$ have maximal variation, see \cite[Lem.\ 3.5]{Totaro}. On the other hand, Bott vanishing definitely fails for $(H.H)=22$, see \cite[Thm.\ 3.2]{Totaro}, but for the generic
K3 surface $|H|$ nevertheless has maximal variation.
\smallskip

\smallskip

(iii) The dimension $h^1(S,\kt_S\otimes H^\ast)$ is linked to the corank of the Wahl map
$${\bigwedge}^2H^0(C,\omega_C)\to H^0(C,\omega_C^3),$$ see \cite{CDS} for details.
Recall that for a smooth curve $C\subset S$ in a K3 surface the Wahl map is not surjective, cf.\ \cite{BM}.
\end{remark}

\section{Comparison with the Hitchin system}\label{sec:Hitchin}

Instead of passing from a (smooth) curve $C\in |H|$ in a K3 surface $S$ to 
non-primitive curves $D\in |mH|$ we now look at spectral curves $D\to C$ of degree $m$. There are many similarities between the two situations. 
To some extent the space $V(\omega_C)$, where the spectral curves live, is in a way easier and more uniform that K3 surfaces, even though the stability of the restriction of the tangent bundle fails.
We will also explain how
to use the information for spectral curves to deduce partial results for K3 surfaces more directly.

\subsection{}\label{sec:HMspectralcurves}
 Let us begin by recalling some basic facts. A \emph{spectral curve} of degree $m$ is naturally associated to $(s_i)\in \bigoplus_{i=1}^mH^0(C,\omega_C^i)$. More precisely, given $(s_i)$ one obtains a curve
$$D\coloneqq D_{(s_i)}\subset V\coloneqq {\mathbb V}(\omega_C)\subset T\coloneqq{\mathbb P}(\ko_C\oplus\omega_C)$$ in the linear system $|\pi^\ast\omega^m_C\otimes\ko_\pi(m)|$, where $\pi\colon T\to C$ is the natural projection. Indeed, $H^0(T,\pi^\ast\omega^m_C\otimes\ko_\pi(m))\cong H^0(C,\omega^m_C\otimes S^m(\ko_C\oplus\omega^\ast_C))\cong \bigoplus_{i=0}^m H^0(C,\omega_C^i)$.
More explicitly, using the canonical sections $s$ and $t$ of $\pi_*\ko_\pi(1)\cong\ko_C\oplus \omega_C^\ast$ and $\pi_\ast(\pi^\ast\omega_C\otimes \ko_\pi(1))\cong \omega_C\oplus\ko_C$ allows
one to write an explicit equation for $D=D_{(s_i)}$:
\begin{equation}\label{eqn:BNR}
t^m+\pi^\ast s_1\cdot t^{m-1}\cdot s+\cdots+\pi^\ast s_m \cdot s^m\in H^0(T,\pi^\ast\omega_C^m\otimes \ko_\pi(m)),
\end{equation} cf.\ \cite[Sec.\ 3]{BNR}.
For example, the zero section $C\subset V\subset T$ corresponds to $(s_1=0)\in H^0(C,\omega_C)$. Also, recall that $\pi_\ast\ko_D\cong\ko_C\oplus\cdots\oplus \omega_C^{-m+1}$ for a spectral curve $\pi\colon D\to C$ of degree $m$ and observe that
the restriction of $\ko_\pi(1)$ to $V\subset T$ is trivial.

The surface $T$ or rather the open set $V\subset T$ should be thought of as a
replacement for the K3 surface $S$. 
For example, $V$ has a natural symplectic structure. 
Furthermore, the choice of $C\in |H|$ induces a filtration $0\subset H^0(S,\ko)\subset H^0(S,H)\subset\cdots\subset H^0(S,mH)$ with quotients
$H^0(C,\omega_C^i)$, $i=0,\ldots, m$. The filtration specialises to the direct sum
$\bigoplus_{i=0}^m H^0(C,\omega_C^i)$. 

The tangent bundle $\kt_V$ of $V$ can be described naturally as an extension
of pull-back line bundles \begin{equation}\label{eqn:tangV}\xymatrix@C=15pt{0\ar[r]&\pi^\ast\omega_C\ar[r]&\kt_V\ar[r]&\pi^\ast\kt_C\cong\pi^\ast\omega^\ast_C\ar[r]&0.}\end{equation} The sequence splits when restricted to the zero-section $C\subset V$.
Contrary to the case of K3 surfaces, the symplectic surface $V$ admits global vector fields. More precisely, $H^0(V,\kt_V)\cong H^0(C,\pi_\ast\kt_V)$ which can be computed 
via the direct image 
$$\xymatrix@C=15pt{0\ar[r]&\pi_\ast\pi^\ast\omega_C\cong \omega_C\otimes\bigoplus_{i\leq 0}\omega_C^i\ar[r]&\pi_\ast\kt_V\ar[r]&\pi_\ast\pi^\ast\kt_C\cong\omega^\ast_C\otimes\bigoplus_{i\leq0}\omega_C^i\ar[r]&0}$$
 of (\ref{eqn:tangV}) as $$H^0(V,\kt_V)\cong H^0(C,\ko_C)\oplus H^0(C,\omega_C).$$
In fact, thinking of $H^0(V,\kt_V)$ as the tangent space of ${\rm Aut}(V)$ both parts can be explained geometrically: (i) The vector bundle $V$ comes with a natural $k^\ast$-action; (ii) Any section $u\in H^0(C,\omega_C)$ acts by translation on $V$. Combining the two, one finds that
$(\lambda,u)\in k^\ast\times H^0(C,\omega_C)$ acts on $V$ by $v\mapsto \lambda v+u$.
From this, one also obtains an action of $k^\ast\times H^0(C,\omega_C)$ on the
space  of spectral covers or, equivalently, on the parameter space $\bigoplus_{i=1}^mH^0(C,\omega_C^i)$. Concretely, $k^\ast$ acts by
 $\lambda\cdot(s_i)=(\lambda^i\cdot s_i)$
and $H^0(C,\omega_C)$  by applying Tschirnhaus transformation to (\ref{eqn:BNR}).

\smallskip

As in the case of K3 surfaces, we are interested in the restriction $\kt_V|_D$ to a
smooth spectral curve $D\to C$.  The restriction of vector fields on $V$ to $D$ can be understood in terms of the exact sequence
$$\xymatrix@C=19pt{0\ar[r]&H^0(V,\kt_V(-D))\ar[r]&H^0(V,\kt_V)\ar[r]^-r&H^0(D,\kt_V|_D)\ar[r]&H^1(V,\kt_V(-D))\ar[r]&\cdots,}$$
which implies the injectivity  of $r$, as $H^0(V,\kt_V(-D))\cong H^0(V,\kt_V\otimes\pi^\ast\omega_C^{-m})=0$ for $m>1$. In fact, $r$ is bijective according
to the next result.

\begin{lem}\label{lem:HM}
Let $D\to C$ be a spectral curve of degree $m\geq 2$. Then the restriction map is an isomorphism
$$\xymatrix{H^0(C,\ko_C)\oplus H^0(C,\omega_C)\cong H^0(V,\kt_V)\ar[r]^-\sim&
 H^0(D,\kt_V|_D).}$$
 In particular, the restriction $\kt_V|_D$ is not semistable and $h^0(D,\kt_V|_D)=g(C)+1$, 
\end{lem}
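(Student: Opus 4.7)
The plan is to exploit the injectivity of $r$, already established in the preamble, and to show that its target $H^0(D,\kt_V|_D)$ has the same dimension $g(C)+1$ as the source $H^0(V,\kt_V)$; equality of dimensions then forces $r$ to be an isomorphism. The non-semistability will follow from a degree computation.

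To carry this out, I would restrict the relative tangent sequence (\ref{eqn:tangV}) to $D$, which remains exact as it is a short exact sequence of locally free sheaves on $V$, and then push forward along the finite morphism $\pi|_D\colon D\to C$. Using the projection formula together with the identification $\pi_\ast\ko_D\cong \bigoplus_{i=0}^{m-1}\omega_C^{-i}$ stated in the excerpt, one obtains on $C$ a short exact sequence
$$0\to \omega_C\oplus\ko_C\oplus\omega_C^{-1}\oplus\cdots\oplus\omega_C^{-(m-2)} \to \pi_\ast(\kt_V|_D) \to \omega_C^{-1}\oplus\cdots\oplus\omega_C^{-m}\to 0.$$
For $g(C)\geq 2$ the negative powers of $\omega_C$ have no global sections, so passing to cohomology collapses the long exact sequence to
$$H^0(D,\kt_V|_D)\cong H^0(C,\pi_\ast(\kt_V|_D))\cong H^0(C,\omega_C)\oplus H^0(C,\ko_C),$$
of dimension $g(C)+1$. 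This matches $h^0(V,\kt_V)$, and the injectivity of $r$ then promotes it to an isomorphism.

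Finally, non-semistability is immediate from a slope computation: since $\det\kt_V\cong\pi^\ast\omega_C\otimes\pi^\ast\omega_C^{-1}\cong\ko_V$, the restriction $\kt_V|_D$ has degree zero, whereas the subbundle $\pi^\ast\omega_C|_D$ has degree $m(2g(C)-2)>0$, violating semistability. The only real piece of work is the pushforward identification, and even this is routine once the description of $\pi_\ast\ko_D$ and the projection formula are invoked; I do not foresee any further obstacle.
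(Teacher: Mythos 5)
Your argument is correct and is essentially the paper's own proof: restrict the sequence $0\to\pi^\ast\omega_C\to\kt_V\to\pi^\ast\omega_C^\ast\to0$ to $D$, push forward along the finite map $\pi|_D$ using $\pi_\ast\ko_D\cong\bigoplus_{i=0}^{m-1}\omega_C^{-i}$ and the projection formula, and take cohomology, which together with the injectivity of $r$ from the preamble gives the isomorphism and $h^0=g(C)+1$. The only (harmless) additions are your explicit dimension-count step and the slope computation for non-semistability, which the paper leaves implicit.
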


\begin{proof} We give an alternative argument to \cite[Sec.\ 3]{HM}. Restricting the exact sequence (\ref{eqn:tangV}) to $D$ and then taking direct image under $\pi\colon D\to C$, we obtain the exact sequence
$$\xymatrix{0\ar[r]&\omega_C\otimes\bigoplus_{i=-m+1}^0\omega_C^i\ar[r]&\pi_*(\kt_V|_D)\ar[r]&\omega_C^\ast\otimes\bigoplus_{i=-m+1}^0\omega_C^i\ar[r]&0.}$$
Taking cohomology yields the result.
\end{proof}


Despite not giving the desired vanishing even for large $m$ and generic $D$, 
compare with Corollary \ref{cor:Hein} and Theorem \ref{thm:main2}, the observation is useful
 as the bound is independent of $m$ and, for example, stronger than the
 obvious one $h^0(D,\kt_V|_D)\leq g_m\coloneqq g(D)$ for $m\geq 2$, cf.\ Remark \ref{rem:BM}.

The analogue of Theorem \ref{thm:main} in the case of spectral curves is the following consequence, cf.\ \cite[Thm.\ 3.1]{HM}.

\begin{cor}[Hodge--Mulase]\label{cor:HM}
Let $C$ be a smooth curve of genus $g$. We view the space $\bigoplus_{i=1}^mH^0(C,\omega_C^i)$ as the  variety of spectral covers $D\to C$ of degree $m$
and consider the rational map
\begin{equation}\label{eqn:HM}\xymatrix{
\Phi\colon\bigoplus_{i=1}^mH^0(C,\omega_C^i)
 \ar@{-->}[r]&M_{g_m}, }(s_i)\mapsto [D_{(s_i)}].
 \end{equation} 
 
 Then every fibre of $\Phi$ through a smooth spectral curve   is of dimension $g+1$. In particular, 
$\Phi$ is nowhere quasi-finite. 
\end{cor}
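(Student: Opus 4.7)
The plan is to replicate the infinitesimal analysis from Section \ref{sec:fibredim} in the spectral setting and then produce fibres of the asserted dimension via the natural symmetries of $V$.

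First I would compute the differential of $\Phi$ at a smooth spectral curve $D\coloneqq D_{(s_i)}$. The parameter space $\bigoplus_{i=1}^m H^0(C,\omega_C^i)$ is an affine chart of the linear system $|\pi^\ast\omega_C^m\otimes\ko_\pi(m)|$ on $T$, so its tangent space at $(s_i)$ is canonically $H^0(D,\ko_D(D))$. Taking cohomology of the normal bundle sequence
\[
0\to\kt_D\to\kt_V|_D\to\ko_D(D)\to 0
\]
and using $H^0(D,\kt_D)=0$ (which holds because $g(D)=m^2(g-1)+1\geq 2$ for any smooth connected spectral curve of degree $m\geq 2$ with $g\geq 2$), the differential $d\Phi|_{(s_i)}$ becomes the connecting map and its kernel coincides with $H^0(D,\kt_V|_D)$, of dimension $g+1$ by Lemma \ref{lem:HM}. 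Since the tangent space to the scheme-theoretic fibre $\Phi^{-1}([D])$ at $(s_i)$ equals this kernel, I obtain $\dim_{(s_i)}\Phi^{-1}([D])\leq g+1$.

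The matching lower bound would come from the natural action of $G\coloneqq k^\ast\times H^0(C,\omega_C)$ on $V$ by scaling and translation; its induced action on the parameter space is recorded in Section \ref{sec:HMspectralcurves}. Because each element of $G$ is a biregular automorphism of $V$, it carries any spectral curve to one abstractly isomorphic to it, so the orbit $G\cdot(s_i)$ is contained in $\Phi^{-1}([D])$. The infinitesimal action map $\mathrm{Lie}(G)\to H^0(D,\ko_D(D))$ factors through the restriction $H^0(V,\kt_V)\to H^0(D,\kt_V|_D)$, which is an isomorphism by Lemma \ref{lem:HM}; composing with the canonical identification $\mathrm{Lie}(G)\cong H^0(C,\ko_C)\oplus H^0(C,\omega_C)\cong H^0(V,\kt_V)$, the orbit map has injective differential at the identity and hence the orbit has dimension $\dim G=1+g=g+1$.

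Combining the two bounds yields $\dim_{(s_i)}\Phi^{-1}([D])=g+1$ for every smooth spectral curve $D$, and the fact that $\Phi$ is nowhere quasi-finite follows immediately. The only substantive input is Lemma \ref{lem:HM}: its bijective restriction statement simultaneously pins down the kernel of $d\Phi$ and forces the $G$-orbit to exhaust that kernel, so the two estimates match without any slack and there is no genuinely hard step beyond Lemma \ref{lem:HM} itself.
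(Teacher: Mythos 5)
Your proposal is correct, and its first half is exactly what the paper does: the proof of Corollary \ref{cor:HM} in the paper is just the transplantation of Sections \ref{sec:fibredim} and \ref{sec:proof}, identifying $d\Phi$ with the connecting map of $0\to\kt_D\to\kt_V|_D\to\ko_D(D)\to0$, so that (using $H^0(D,\kt_D)=0$) the kernel of $d\Phi$ is $H^0(D,\kt_V|_D)$, of dimension $g+1$ by Lemma \ref{lem:HM}. Where you take a different route is the lower bound. The paper obtains equality from the characteristic-zero statement in the proof of Lemma \ref{lem:tangent} (the fibre dimension equals $\dim\ker(d\Phi)$ at a generic point), together with the tacit observation that here this kernel has constant dimension $g+1$ at \emph{every} smooth spectral curve, so the conclusion holds for every such fibre; you instead exhibit the $(g+1)$-dimensional isotrivial family inside each fibre as the orbit of the group $k^\ast\times H^0(C,\omega_C)$ acting by $v\mapsto\lambda v+u$ (the action already set up in Section \ref{sec:HMspectralcurves}), whose infinitesimal action injects into $H^0(D,\ko_D(D))$ because $H^0(V,\kt_V)\to H^0(D,\kt_V|_D)$ is injective (for the orbit bound injectivity suffices; bijectivity, i.e.\ the full strength of Lemma \ref{lem:HM}, is what makes the two bounds meet) and $H^0(D,\kt_D)=0$. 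This is a legitimate and in fact more explicit argument: it gives the lower bound at every point of the smooth spectral locus without appealing to generic smoothness or constant rank, it literally realizes the description ``every spectral curve is contained in a $(g+1)$-dimensional isotrivial family'' from the introduction, and it is closer in spirit to Hodge--Mulase's original proof. Note that, like the paper's own argument, yours implicitly requires $m\geq2$ (for Lemma \ref{lem:HM}) and $g\geq2$ (so that $H^0(D,\kt_D)=0$), which is the setting in which the corollary is applied.
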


\begin{proof}
The arguments are identical to the ones in Sections \ref{sec:fibredim} and \ref{sec:proof}.
\end{proof}

\begin{remark}
It turns out that Bott vanishing holds for the surface $T$ with respect to spectral curves,
i.e.\ $H^1(T,\kt_T\otimes\ko(-D))=0$ for $D\in|\pi^\ast\omega_C^m\otimes\ko_\pi(m)|$.
To proves this, one uses the exact sequence
$\xymatrix@C=15pt{0\ar[r]&\kt_\pi\cong\pi^\ast\omega_C\otimes\ko_\pi(2)\ar[r]&\kt_T\ar[r]&\pi^\ast\kt_C\ar[r]&0}$ for the projective bundle $\pi\colon T=\PP(\ko_C\oplus\omega_C)\to C$.

Geometrically this means that the map $\tilde\Phi\colon(D\subset T)\mapsto D$
is infinitesimally injective. Here, $T$ can vary as well, either as a general surface, i.e.\
letting both $C$ and the bundle $\ko_C\oplus\omega_C$ vary, or only as the completion
of the cotangent bundle of deformations of $C$. So, the relative tangent spaces
of $\Phi$ and $\tilde\Phi$ at a spectral curve $D$ are
$$\kt_\Phi([D])\cong H^0(C,\omega_C)\oplus H^0(C,\ko_C)\text{ and }\kt_{\tilde\phi}([D])\cong H^1(T,\kt_T(-D))=0.$$
Note this does not contradict the idea of Lemma \ref{lem:BVqf}, as for the map $\tilde\Phi$
one divides by the action of the group ${\rm Aut}(T)$ whose tangent space is exactly $\kt_\Phi([D])$.
\end{remark}

\subsection{}\label{sec:VGVspectral}
 Specialisation from curves $D\in |mH|$ on a K3 surface to spectral curves
$D\twoheadrightarrow C$ of degree $m$ immediately yields a version of Corollary \ref{cor:dim2} and Theorem \ref{thm:main}. The bound here is worse but below we will see that
combined with a conjecture of Matsushita it would in fact suffice to prove Theorem \ref{thm:main}, thereby giving an alternative approach to our main result.

\begin{cor}\label{cor:atmostg}
Assume $|H|$ is a base point free, ample
linear system on a K3 surface $S$ of genus $g$. Then for all $m\geq 1$ the generic fibre of the rational
map $\xymatrix{\Phi\colon|mH|\ar@{-->}[r]&M_{g_m}}$ is of dimension at most $g+1$.
In other words, a generic curve $D\in |mH|$ satisfies
$$h^0(D,\kt_S|_D)\leq g+1\text{ and }\dim \Phi^{-1}([D])\leq g+1.$$
\end{cor}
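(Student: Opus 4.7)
The plan is to use specialisation to the normal cone of a primitive section $C\in|H|$ in $S$, which degenerates curves in $|mH|$ into spectral covers of degree $m$ over $C$, and then to compare cohomology via upper semi-continuity. By Lemma \ref{lem:tangent}, the desired fibre dimension bound will follow from the cohomological bound $h^0(D,\kt_S|_D)\leq g+1$ for the generic $D\in|mH|$. The case $m=1$ is already subsumed by Corollary \ref{cor:Hein}, which yields the stronger estimate $h^0\leq 2\leq g+1$ (note that ampleness of $H$ on a K3 forces $g\geq 2$), so I restrict to $m\geq 2$.

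Fix a smooth curve $C\in|H|$ and set $V=\mathbb{V}(\omega_C)\subset T=\PP(\omega_C\oplus\ko_C)$ as in Section \ref{sec:HMspectralcurves}. Consider the deformation to the normal cone
\[
\pi\colon \mathcal{S}=\operatorname{Bl}_{C\times\{0\}}(S\times\mathbb{A}^1)\to\mathbb{A}^1,
\]
whose generic fibre is isomorphic to $S$ and whose special fibre decomposes as $\mathcal{S}_0=\tilde S\cup T$, the strict transform $\tilde S\cong S$ being glued to the exceptional divisor $T$ along the section $C_\infty\subset T$ complementary to $V$. Following the specialisation technique of Donagi--Ein--Lazarsfeld \cite{DEL} employed in \cite{MaulikShen}, an appropriate extension of $\ko_S(mH)$ to $\mathcal{S}$ (twisted by the exceptional divisor) produces a flat family $\mathcal{D}\subset\mathcal{S}$ over $\mathbb{A}^1$ whose generic fibre is a generic smooth curve $D_t\in|mH|$ and whose flat limit $D_0$ is a smooth spectral cover of degree $m$ of $C$ sitting inside $V$.

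The key geometric observation is that $V\subset\mathcal{S}_0$ lies in the smooth locus of $\pi$, so the relative tangent sheaf $\kt_{\mathcal{S}/\mathbb{A}^1}$ is locally free of rank two in a neighbourhood of $\mathcal{D}$ and satisfies $\kt_{\mathcal{S}/\mathbb{A}^1}|_V\cong\kt_V$, while of course $\kt_{\mathcal{S}/\mathbb{A}^1}|_{\mathcal{S}_t}\cong\kt_S$ for $t\ne 0$. Restricting to $\mathcal{D}$ gives a vector bundle interpolating between $\kt_S|_{D_t}$ and $\kt_V|_{D_0}$, and upper semi-continuity of cohomology along the flat family $\mathcal{D}\to\mathbb{A}^1$, combined with Lemma \ref{lem:HM}, yields
\[
h^0(D_t,\kt_S|_{D_t})\;\leq\;h^0(D_0,\kt_V|_{D_0})\;=\;g+1.
\]
Invoking Lemma \ref{lem:tangent} then bounds $\dim\Phi^{-1}([D_t])$ by the same quantity $g+1$, yielding both inequalities in the statement.

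The main obstacle is the degeneration step itself: one must verify that the twisted extension of $\ko_S(mH)$ to $\mathcal{S}$ produces a limit curve inside the $V$-component of $\mathcal{S}_0$ (rather than being absorbed into $\tilde S$), and that for a generic one-parameter family the limit is smooth. This is precisely the content of the specialisation to the normal cone, worked out in \cite{DEL,MaulikShen}; once granted, the upper semi-continuity argument is entirely formal.
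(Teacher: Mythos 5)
Your proposal is correct and follows essentially the same route as the paper: the paper's proof also fixes a smooth $C\in|H|$, degenerates $C\subset S$ to its normal cone so that $|mH|$ specialises to the linear system of degree-$m$ spectral curves in ${\mathbb V}(\omega_C)$, and then applies semi-continuity together with Lemma \ref{lem:HM} (and Lemma \ref{lem:tangent}) to get the bound $g+1$, with the case $m=1$ dismissed as immediate. Your account merely spells out the blow-up model of the degeneration and the identification of the relative tangent sheaf in slightly more detail than the paper does.
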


\begin{proof}
The assertion is certainly true for $m=1$, so let us restrict to the case $m\geq 2$. 
Fix a smooth curve $C\in |H|$ and consider the degeneration of $C\subset S$
to its normal cone $C\subset V={\mathbb V}( \omega_C)$ or, alternatively,
to $C\subset \bar T$, where $\bar T$ is the contraction of the section at infinity  $\PP(\ko_C)\subset T=\PP(\ko_C\oplus \omega_C)$. 

In the process, the linear system $|mH|=|mC|$ on $S$ 
specialies to the linear system $|\pi^\ast\omega_C^m\otimes\ko_\pi(m)|=|mC|$ on 
$V$ (or on $\bar T$). Thus, spectral curves $D\to C$ can be seen as specialisations
of curves $D_t\in |mH|$,  $t\ne0$, on the constant family of K3 surfaces $S$.
By semi-continuity, $h^0(D,\kt_S|_D)\leq g+1$ follows from the corresponding
statement $h^0(D,\kt_V|_D)=g+1$ for spectral curves, see Lemma \ref{lem:HM}.

Alternatively and more geometrically, one can view (\ref{eqn:HM}) as the specialisation of
the rational map $\xymatrix{\Phi\colon|H|\ar@{-->}[r]&M_{g_m},}$ which also proves the claim.
\end{proof}

\subsection{}\label{sec:VGVspectral2} Corollary \ref{cor:atmostg} can be combined with a result of van Geemen--Voisin \cite{VGV} to reprove Theorem \ref{thm:main} for the generic polarized K3 surface $(S,H)$. To make
the connection, we need to recall how to associate a Lagrangian fibration
to the linear system $|mH|$ on a K3 surface. For this consider the Mukai vector
$v_m=(0,mH,s)$ with $s$ an integer prime to $m$. Then the moduli space
$M(v_m)$  of sheaves $E$ with Mukai vector $v(E)=v_m$ which are
stable with respect to a generic polarization is a projective hyperk\"ahler manifold deformation equivalent to the Hilbert scheme $S^{[g_m]}$, cf.\ \cite[Sec.\ 6.2]{HL} or \cite[Sec.\ 10.2]{HuyK3} for the precise statement and references. The map that sends $E\in M(v_m)$ to its scheme-theoretic support defines a Lagrangian fibration
$$f\colon M(v_m)\to |mH|.$$ The  fibre over a generic curve $D\in|mH|$ is isomorphic to the Jacobian
$J(D)$. The infinitesimal Torelli theorem for curves ensures that 
the family of curves $D\in |mH|$ has maximal variation if and only if the same holds
for the abelian fibres of the Lagrangian fibration $f$. More generally,
the amount of variation is the same for $M(v_m)\to |mH|$ and the universal curve ${\mathcal D}\to|mH|$.

\begin{cor}\label{cor:Yagna} For each degree $d$, there exists a Zariski dense open subset
$ U\subset N_d$ in the moduli space of polarized K3 surfaces  $(S,H)$ of degree $(H.H)=d$ such that for every $(S,H)\in U$ and $m\geq 2$ the rational map $\xymatrix{|mH|\ar@{-->}[r]& M_{g_m}}$ is generically quasi-finite.
\end{cor}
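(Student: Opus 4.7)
The plan is to combine the uniform bound of Corollary \ref{cor:atmostg} with the verification by van Geemen and Voisin \cite{VGV} of Matsushita's conjecture for very general hyperk\"ahler manifolds.

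First, I would take $U\subset N_d$ to be the Zariski dense open subset consisting of polarized K3 surfaces $(S,H)$ whose transcendental Hodge structure has maximal Mumford--Tate group and which satisfy $\rho(S)\leq 17$; both conditions are generic, so $U$ is Zariski dense and open in $N_d$. Fix $(S,H)\in U$ and $m\geq 2$, choose an integer $s$ coprime to $m$, and form the moduli space $M(v_m)$ of Gieseker-stable sheaves with Mukai vector $v_m=(0,mH,s)$, together with its Lagrangian support morphism $f\colon M(v_m)\to|mH|$ whose fibre over a generic smooth curve $D\in|mH|$ is the Jacobian $J(D)$.

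Next, I would invoke the discussion in Section \ref{sec:VGVspectral2}: by (infinitesimal) Torelli for curves, the variation of the abelian fibration $f$ agrees with the variation of the family of curves in $|mH|$, so the generic fibre dimension of $f$ equals the generic fibre dimension of $\xymatrix{\Phi\colon|mH|\ar@{-->}[r]&M_{g_m}}$. Corollary \ref{cor:atmostg} bounds the latter by $g+1$. Since $g+1=(d/2)+2<(m^2/2)d+1=g_m$ for all $m\geq 2$ and $d\geq 2$, the fibration $f$ is not isotrivial.

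The final step applies the main result of \cite{VGV}: for $(S,H)\in U$ the hyperk\"ahler manifold $M(v_m)$ satisfies Matsushita's dichotomy, so the variation of $f$ is either zero or maximal. Having ruled out isotriviality, $f$ must have maximal variation, whence so does the family of curves in $|mH|$, i.e.\ $\Phi$ is generically quasi-finite. The one nontrivial point, and the main technical obstacle, is to verify that the genericity conditions defining $U\subset N_d$ place $M(v_m)$ in the range of applicability of \cite{VGV}. This follows from the standard fact that the transcendental Hodge structure of $M(v_m)$ is Hodge-isometric to that of $S$ (up to Tate twist), so maximality of the Mumford--Tate group and the bound $\rho\leq 17$ transfer automatically from $(S,H)$ to $M(v_m)$; once this is in place the rest of the argument is formal.
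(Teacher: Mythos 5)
There is a genuine gap at the very first step: the set $U$ you define is not Zariski open. The conditions ``maximal Mumford--Tate group'' and ``$\rho(S)\leq 17$'' fail precisely on a \emph{countable} union of proper closed subsets of $N_d$ (Noether--Lefschetz and Hodge loci), so their complement is only the very general locus, not a Zariski open subset. Since the statement asks for a Zariski dense \emph{open} $U\subset N_d$, your choice of $U$ does not prove it, and no appeal to \cite{VGV} can repair this as stated, because the hypotheses of \cite[Thm.\ 5]{VGV} themselves only hold on such a countable-complement locus. The missing idea, which is exactly how the paper argues, is that generic quasi-finiteness of $\xymatrix{|mH|\ar@{-->}[r]&M_{g_m}}$ is an open condition on $(S,H)$ in the moduli space: it therefore suffices to exhibit a \emph{single} polarized K3 surface of degree $d$ for which the map is generically quasi-finite, and then $U$ is taken to be the (automatically open, dense) locus where quasi-finiteness holds. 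One verifies the property for a very general $(S,H)$, which has $\rho=1$ and ${\rm End}_{\rm Hdg}(T(S))\cong\QQ$, hence maximal Mumford--Tate group, so that \cite[Thm.\ 5]{VGV} applies; isotriviality is then excluded by Corollary \ref{cor:atmostg} exactly as you do.

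Apart from this, the core of your argument (Matsushita dichotomy for $f\colon M(v_m)\to|mH|$ via \cite{VGV}, exclusion of the isotrivial branch by the bound $g+1<g_m$ from Corollary \ref{cor:atmostg}, and the transfer of the Hodge-theoretic hypotheses from $S$ to $M(v_m)$ via the Hodge isometry of transcendental lattices) matches the paper's proof; your explicit remark on the transfer to $M(v_m)$ is a reasonable elaboration of a point the paper leaves implicit. But as written, the openness of $U$ is asserted rather than proved, and that assertion is false for the $U$ you chose; you need to reorganize the proof around the openness of quasi-finiteness itself.
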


\begin{proof} Quasi-finiteness is an open condition. Therefore, it is enough to 
find one polarized K3 surface $(S,H)$ for which the assertion holds. It is
known that the very general polarized K3 surface of degree $d$ has Picard
number one and a trivial endomorphism field $\QQ\cong{\rm End}_{\rm Hdg}(T(S))$,
cf.\ \cite[Lem.\ 9]{VGV}.  In particular,
its Mumford--Tate group is maximal and, thus, the assumptions of \cite[Thm.\ 5]{VGV} are satisfied. Hence, either the curves in the linear system $|mH|$, $m\geq 2$ have maximal or trivial variation. The latter is excluded by Corollary \ref{cor:atmostg}.
\end{proof}

\begin{remark} (i) The technique actually shows more. It shows that in any irreducible
closed subset $Z\subset N_d$ of the moduli space of polarized K3 surface containing at
least one K3 surface $(S,H)\in Z$ with maximal Mumford--Tate group, the set of
K3 surfaces $(S,H)\in Z$ with maximal variation in $|mH|$, $m\geq 2$, is Zariski open and dense. 
 \smallskip
 
 (ii) Note that in the above proof, instead of Corollary \ref{cor:atmostg} one could have evoked Corollaries
\ref{cor:Hein} and \ref{cor:dim2}, which would have allowed us to prove the result
for $m=1$ and $g>2$ as well. However,  the approach via Hitchin systems is easier for it avoids any restriction result as Theorem \ref{thm:Hein}.
\end{remark}

\section{Primitive linear systems and Matsushita's conjecture}\label{sec:counterexamples}
In the proof of Corollary \ref{cor:Yagna} we used \cite[Thm.\ 5]{VGV} which proves
a conjecture of Matsushita under additional assumptions on
the rank of the transcendental lattice $T(X)\subset H^2(X,\ZZ)$ and its
 Mumford--Tate group. More precisely, van Geemen--Voisin answer affirmatively the following conjecture for very general hyperk\"ahler manifolds.

\begin{conj}[Matsushita]\label{conj:Matsu}
Let $f\colon X\to\PP^m$ be a Lagrangian fibration of a projective hyper\-k\"ahler manifold.
Then, either the family of smooth fibres, which are abelian varieties, has maximal variation or it is isotrivial.
\end{conj}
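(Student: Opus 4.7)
The plan is to study the polarised variation of Hodge structures $\mathbb{V} = R^1 f_\ast \QQ|_U$ of weight one defined over the smooth locus $U \subset \PP^m$ of $f$. Since the infinitesimal Torelli theorem holds for principally polarised abelian varieties, the maximal variation of the smooth fibres is equivalent to the generic injectivity of the period map $P \colon U \to \Gamma \backslash D$ into the relevant period domain, while isotriviality corresponds to $P$ being constant. So the task is to rule out any intermediate behaviour.

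The key starting point is Matsushita's structure result which canonically identifies the Hodge bundle $F^1 \mathbb{V} \otimes \ko_U$ with the restriction of $\Omega^1_{\PP^m}$ to $U$. Under this identification, the derivative of the period map becomes a tensor built out of the Kodaira--Spencer map of $f$ and can be expressed purely in terms of the cotangent sheaf of $\PP^m$. I would then decompose $\mathbb{V}$ under the action of its generic Mumford--Tate group $G_{\mathbb{V}}$ into isotypic components, so that the largest constant sub-VHS $\mathbb{V}^{\mathrm{fix}}$ and its complement $\mathbb{V}^{\mathrm{mov}}$ become orthogonal summands.

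Next I would argue by contradiction: assume the variation is neither maximal nor trivial, so that both $\mathbb{V}^{\mathrm{fix}}$ and $\mathbb{V}^{\mathrm{mov}}$ are non-zero. Via the Matsushita identification, this splitting should produce a non-trivial decomposition of $\Omega^1_{\PP^m}|_U$ as a direct sum of algebraic sub-bundles; since $\Omega^1_{\PP^m}$ is $\mu$-stable and $\Pic(\PP^m) \cong \ZZ$, any such decomposition that extends across the discriminant is impossible unless one of the factors is trivial. The heart of the proof is therefore an extension argument showing that the flat splitting provided by the Hodge decomposition, which is a priori defined only over $U$, extends to a splitting of coherent sheaves on all of $\PP^m$.

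The main obstacle is precisely this extension across the discriminant $\Delta = \PP^m \setminus U$, together with its subtle interplay with the local monodromies of $\mathbb{V}$. One must understand the degeneration types at each component of $\Delta$ (in the hyperk\"ahler setting these are constrained but not completely classified) and then control how the isotypic decomposition interacts with the vanishing cycles and with the Deligne canonical extension of $\mathbb{V}$. This is the step where van Geemen and Voisin in \cite{VGV} impose the additional assumption that the Mumford--Tate group is as large as possible: under that hypothesis, representation-theoretic rigidity forces the decomposition to be trivial a priori, which is why the conjecture is currently known only under that extra constraint. Removing it appears to require a genuinely new global input, perhaps from the compactified relative Jacobian or from the full hyperk\"ahler structure on $X$, that is at present out of reach.
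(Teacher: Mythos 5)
You set out to prove Conjecture \ref{conj:Matsu}, but this is precisely a statement the paper does \emph{not} prove and explicitly treats as open: it is known only for very general hyperk\"ahler manifolds (maximal Mumford--Tate group) by van Geemen--Voisin \cite{VGV}, and the paper offers its Theorem \ref{thm:main} as \emph{evidence} for the conjecture, not a proof of it. So there is no proof in the paper to compare with, and your proposal is, by your own concluding admission, a programme rather than a proof: the decisive step -- extending the flat/isotypic splitting across the discriminant without the Mumford--Tate maximality hypothesis -- is exactly the part of the problem that remains open.

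Beyond that, the strategy has a gap earlier than the step you flag. The reduction at the start of your contradiction argument is not valid: ``neither maximal nor isotrivial'' does \emph{not} imply that the fixed part $\mathbb{V}^{\rm fix}$ is non-zero. The period map can have positive-dimensional generic fibres while the variation admits no non-trivial constant sub-VHS at all (for instance, it may simply factor through a lower-dimensional base), so the splitting you want to feed into the stability of $\Omega^1_{\PP^m}$ need not exist; ruling out intermediate behaviour requires controlling the kernel of the differential of the period map, not just the decomposition under the generic Mumford--Tate group -- this is where \cite{VGV} genuinely use their maximality hypothesis. Moreover, even granting a splitting of $\Omega^1_{\PP^m}|_U$, stability (equivalently simplicity) of $\Omega^1_{\PP^m}$ only excludes a direct-sum decomposition on all of $\PP^m$; a sub-bundle over $U$ extends merely as a saturated subsheaf, with no a priori degree control and no reason for the \emph{splitting} to extend, which is the monodromy/canonical-extension issue you name but do not resolve. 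Two smaller inaccuracies: the smooth fibres need not be principally polarised, and Matsushita's theorem identifies $R^1f_\ast\ko_X$, i.e.\ the graded quotient rather than $F^1$, with $\Omega^1_{\PP^m}$. Note also that the paper's logic runs in the opposite direction: it uses the proven case \cite[Thm.~5]{VGV} of the conjecture together with its own bounds (Corollaries \ref{cor:dim2} and \ref{cor:atmostg}) to exclude isotriviality and thereby deduce maximal variation for (very) general K3 surfaces (Corollary \ref{cor:Yagna}, Proposition \ref{prop:mainMatsu}), rather than attempting to prove Conjecture \ref{conj:Matsu} itself.
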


\subsection{} Our main result Theorem \ref{thm:main} can be seen as further evidence for the conjecture, because it immediately confirms it for essentially all Lagrangian fibrations
$f\colon M(v_m)\to \PP^{g_m}$ associated with the non-primitive, base point free ample 
linear system $|mH|$, $m\geq2$, see  Section \ref{sec:VGVspectral2} for the notation and explanations. As it is in principle possible that  Conjecture \ref{conj:Matsu}
holds generically but fails for special Lagrangian fibrations, the fact that
Theorem \ref{thm:main} holds for essentially all K3 surfaces is seen as
strong evidence in favour of the conjecture.

\medskip

Conversely, Matsushita's conjecture combined with Corollary \ref{cor:Hein}
would provide a quick way to prove Theorem \ref{thm:main} and not only
for $m\geq 2$, but in fact for all $m\geq 1$.

\begin{prop}\label{prop:mainMatsu}
Let $H$ be a base point free, ample linear system on a K3 surface $S$ of genus $g\geq 3$.
Assume Conjecture \ref{conj:Matsu} holds for the moduli space $M(v)$ of stable sheaves
with Mukai vector $v=(0,H,s)$ and the natural Lagrangian fibration $f\colon M(v)\to \PP^g$. Then the linear system $|H|$ has maximal variation.
\end{prop}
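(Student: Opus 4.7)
The plan is to translate maximal variation of $|H|$ into a statement about variation of the abelian fibres of $f\colon M(v)\to\PP^g$, then invoke Matsushita's dichotomy.

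First, I would identify the generic fibre of $f$ with a torsor under the Jacobian of the corresponding curve. Over the open set $|H|_{\rm sm}\subset |H|=\PP^g$ of smooth curves, the fibre $f^{-1}([C])$ parametrises line bundles on $C$ of a fixed degree determined by $s$; in particular, it is isomorphic to $J(C)$ as a principally polarized abelian variety (non-canonically, but the isomorphism class of the unpolarized abelian variety is well-defined). Consequently, the moduli map of abelian fibres factors as
\[
\xymatrix{|H|_{\rm sm}\ar[r]^-{\Phi}& M_g\ar[r]^-{j}& \ka_g,}
\]
where $j$ is the Torelli morphism sending $[C]\mapsto [J(C)]$. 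The infinitesimal Torelli theorem for curves of genus $g\geq 3$ states that $dj$ is injective at any non-hyperelliptic curve, and moreover $j$ has finite fibres on the locus of non-hyperelliptic curves. Hence, generically, the variation of abelian fibres of $f$ equals the variation of curves in $|H|$: if the family of fibres is isotrivial (resp.\ has maximal variation), then so does the family of curves.

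Next I would apply Conjecture \ref{conj:Matsu} to $f\colon M(v)\to\PP^g$. The conjecture provides the dichotomy: either the family of smooth abelian fibres has maximal variation, or it is isotrivial. In the first case, by the discussion above, the map $\Phi\colon|H|\dashrightarrow M_g$ is generically quasi-finite, i.e.\ $|H|$ has maximal variation, which is exactly the desired conclusion.

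It then remains to rule out the isotrivial case, and this is where I would use Corollary \ref{cor:dim2}: since $|H|$ is base point free and ample, the generic fibre of $\Phi$ has dimension at most two, so $\dim\operatorname{Im}(\Phi)\geq g-2\geq 1$ because $g\geq 3$. In particular $\Phi$ is non-constant, so the family of curves cannot be isotrivial, and hence neither can the family of Jacobian fibres. This contradicts the isotrivial alternative in Matsushita's dichotomy, completing the argument.

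The main subtlety is the identification step at the start, namely ensuring that the moduli-theoretic variation of the abelian fibres of $f$ really coincides with the variation of the curves in $|H|$ (even in the presence of hyperelliptic loci, where Torelli can have positive-dimensional fibres). For $g\geq 3$ the hyperelliptic locus has codimension $\geq 1$ in $M_g$, so generic variation of Jacobians still forces generic quasi-finiteness of $\Phi$; this is a standard application of infinitesimal Torelli, but it is the one point where the genus hypothesis $g\geq 3$ is genuinely used.
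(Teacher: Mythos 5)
Your proposal is correct and follows essentially the same route as the paper: the paper's proof likewise invokes Corollary \ref{cor:dim2} (via Corollary \ref{cor:Hein}) to see that an isotrivial family of curves through a generic point of $|H|$ has dimension at most two, concludes that for $g\geq 3$ the fibration $f\colon M(v)\to\PP^g$ cannot be isotrivial, and then applies Conjecture \ref{conj:Matsu} to get maximal variation, with the transfer between variation of the abelian fibres and of the curves handled by the Torelli argument already recorded in Section \ref{sec:VGVspectral2}. Your extra care about the hyperelliptic locus and the factorisation through $M_g$ is a harmless elaboration of that same transfer step.
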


\begin{proof}
Due to Corollaries \ref{cor:Hein} and \ref{cor:dim2}, we know that an isotrivial family of curves through the generic point in $|H|$  is of dimension at most two. In particular, for $g\geq3$ the Lagrangian fibration $M(v)\to\PP^g$ cannot be isotrivial. Hence, by Conjecture \ref{conj:Matsu}, it has to have maximal variation.
\end{proof}

As \cite[Thm.\ 5]{VGV} verifies Conjecture \ref{conj:Matsu} under additional hypotheses,
one immediately obtains the following consequence.

\begin{cor}
Assume $S$ is a K3 surface with Picard number $\rho(S)\leq 17$ and Hodge
endomorphism field ${\rm End}(T(S)\otimes \QQ)\cong\QQ$. Then
any ample, base point free linear system $|H|$ has maximal variation.\qed
\end{cor}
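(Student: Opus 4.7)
The strategy is to apply Proposition \ref{prop:mainMatsu} to reduce the corollary to the already-established verification of Matsushita's conjecture by van Geemen--Voisin \cite[Thm.\ 5]{VGV}, and to handle the small-genus case separately.

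First, I would observe that ampleness of $H$ forces $(H.H) \geq 2$ and hence $g \geq 2$. The case $g = 2$ is already settled, unconditionally on $S$, by Proposition \ref{prop:Yagna}, so it suffices to treat $g \geq 3$.

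For $g \geq 3$, the plan is to apply Proposition \ref{prop:mainMatsu} to the Beauville--Mukai Lagrangian fibration $f \colon M(v) \to \PP^g$ associated with the Mukai vector $v = (0, H, s)$ for an appropriate integer $s$. The statement then reduces to verifying Matsushita's Conjecture \ref{conj:Matsu} for $f$, which is exactly the content of \cite[Thm.\ 5]{VGV}. Its hypotheses are maximality of the Mumford--Tate group of the transcendental Hodge structure of the total space together with a sufficiently large rank of the transcendental lattice. The key step is transferring these two hypotheses from $S$ to $M(v)$: by Mukai's theory, the transcendental Hodge structures $T(M(v))$ and $T(S)$ are isomorphic as polarized weight-two Hodge structures of K3 type, so the Hodge endomorphism field of $T(M(v))$ is again $\QQ$ by assumption. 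By Zarhin's theorem this is equivalent to maximality of the Mumford--Tate group. The rank condition translates to $\rk T(S) \geq 5$, which is implied by $\rho(S) \leq 17$ via the identity $\rho(S) + \rk T(S) = 22$.

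Putting everything together: \cite[Thm.\ 5]{VGV} applies to $M(v)$, Matsushita's conjecture holds for $f$, and Proposition \ref{prop:mainMatsu} delivers the maximal variation of $|H|$ in the range $g \geq 3$. Combined with the direct treatment for $g = 2$, this proves the corollary. The main (and essentially only) obstacle is the bookkeeping needed to match the hypotheses on $S$ with those on $M(v)$ required by \cite[Thm.\ 5]{VGV}, which amounts to the standard Mukai-theoretic identification of transcendental Hodge structures together with Zarhin's characterisation of when the Mumford--Tate group is maximal.
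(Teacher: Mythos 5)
Your proposal is correct and follows essentially the same route as the paper: the corollary is deduced by combining Proposition \ref{prop:mainMatsu} with the van Geemen--Voisin verification of Matsushita's conjecture, the hypotheses $\rho(S)\leq 17$ and ${\rm End}(T(S)\otimes\QQ)\cong\QQ$ being exactly what is needed (via the Hodge isometry $T(M(v))\cong T(S)$ and Zarhin's criterion) to invoke \cite[Thm.\ 5]{VGV}, with the genus two case covered by Proposition \ref{prop:Yagna}. Your explicit bookkeeping of the hypothesis transfer from $S$ to $M(v)$ is just a spelled-out version of what the paper treats as immediate.
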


\subsection{} To conclude, let us look at the case of primitive linear systems of genus two
which are not covered by Proposition \ref{prop:mainMatsu} and for which maximal variation can in fact be shown directly and without relying on Conjecture \ref{conj:Matsu}.

Recall that a base point free, ample linear system $|H|$ of genus two, i.e.\ $(H.H)=2$,
induces a degree two morphism $\pi\colon S\to \PP^2$ which is branched along a smooth
sextic curve $D_0\in |\ko_{\PP^2}(1)|$. 

\begin{prop}\label{prop:Yagna}
A base point free, ample linear system $|H|$ of degree $(H.H)=2$ on a K3 surface
has maximal variation.
\end{prop}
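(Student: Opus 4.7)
The plan is to establish $H^0(C,\kt_S|_C)=0$ for a generic smooth curve $C\in|H|$, which by Lemma \ref{lem:tangent} immediately yields that $\Phi\colon|H|\dashrightarrow M_2$ is generically quasi-finite. The approach exploits the double cover structure $\pi\colon S\to\PP^2$ branched along the smooth sextic $D_0$. The differential $d\pi\colon \kt_S\to\pi^*\kt_{\PP^2}$ is an injection of rank-two bundles, with cokernel a line bundle $\kl_R$ supported on the ramification divisor $R\cong D_0$. For a line $L$ transverse to $D_0$, the preimage $C=\pi^{-1}(L)$ is a smooth genus-two curve, and the curve $R$ meets $C$ transversally in the divisor $W$ of the six Weierstrass points, so the sequence restricts to
\begin{equation*}
0\to\kt_S|_C\to\pi^*\kt_{\PP^2}|_C\to\kl_R|_W\to0.
\end{equation*}

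Taking global sections, the first term becomes the kernel of the induced map $\alpha$. Using the projection formula together with $\pi_*\ko_C\cong\ko_L\oplus\ko_L(-3)$ and $\kt_{\PP^2}|_L\cong\ko_L(1)\oplus\ko_L(2)$, one computes
\begin{equation*}
H^0(C,\pi^*\kt_{\PP^2}|_C)\cong H^0(L,\kt_{\PP^2}|_L)\cong k^5 \quad\text{and}\quad H^0(W,\kl_R|_W)\cong k^6.
\end{equation*}
Geometrically, $\alpha\colon k^5\to k^6$ sends a section $\sigma$, viewed as a vector field on $\PP^2$ restricted to $L$, to the collection of values $\sigma(p_i)\bmod T_{D_0}|_{p_i}$ for $\{p_1,\dots,p_6\}=L\cap D_0$. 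Thus the kernel of $\alpha$ consists of vector fields on $L$ tangent to $D_0$ at each intersection point, and the task reduces to showing these six tangency conditions are linearly independent on $H^0(L,\kt_{\PP^2}|_L)$ for generic $L$.

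To make this explicit, I would write $D_0=\{F=0\}$ for a smooth sextic $F$ and use Euler's identity at each $p_i=[1:u_i:0]$ to express the tangency condition at $p_i$ in terms of the values $F_{x_1}(1,u_i,0)$ and $F_{x_2}(1,u_i,0)$. After choosing a splitting of the Euler sequence on $L$ to write $\sigma=(p(u),q(u))$ with $p\in H^0(\ko(2))$ and $q\in H^0(\ko(1))$, the kernel condition becomes the existence of a nontrivial syzygy
\begin{equation*}
F_{x_1}(1,u,0)\,p(u)+F_{x_2}(1,u,0)\,q(u)=c(u)\,F(1,u,0)
\end{equation*}
with $\deg p\leq2$, $\deg q\leq1$, $\deg c\leq1$. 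The injectivity of $\alpha$ is equivalent to ruling out such a nontrivial syzygy for generic $L$.

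The main obstacle is precisely this syzygy-free statement, which must hold for \emph{every} smooth sextic $D_0$, not just a generic one. I expect to handle this using the smoothness of $D_0$: since $D_0$ is smooth, the partial derivatives $F_{x_0},F_{x_1},F_{x_2}$ have no common zero, and an upper semicontinuity argument in $L\in(\PP^2)^*$ allows one to reduce to verifying the claim at a single well-chosen line $L$. If a nontrivial syzygy existed for \emph{every} line, the corresponding family of kernel sections would assemble into a nonzero subsheaf of $p_*q^*\kt_S$ on $|H|=\PP^2$, and a cohomological argument combined with $H^0(S,\kt_S)=0$ analogous to the pencil argument in the proof of Theorem \ref{thm:main2} would force a contradiction. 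Once $\alpha$ is shown to be injective, Lemma \ref{lem:tangent} concludes the proof.
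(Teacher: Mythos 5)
Your setup is sound: the sequence $0\to\kt_S|_C\to\pi^\ast\kt_{\PP^2}|_C\to\kl_R|_W\to0$ obtained by restricting the cokernel sequence of $d\pi$ to a transverse $C=\pi^{-1}(L)$, the count $h^0(\pi^\ast\kt_{\PP^2}|_C)=5$, $h^0(\kl_R|_W)=6$, and the reformulation of $\ker\alpha$ as the syzygy condition $F_{x_1}p+F_{x_2}q=cF$ on $L$ are all correct, so the Proposition is indeed equivalent to the injectivity of $\alpha$ for generic $L$. But that injectivity is exactly the content of the statement, and you do not prove it; both of your suggested strategies stop short. Semicontinuity does reduce to exhibiting a single syzygy-free line, but this must be done for \emph{every} smooth sextic $D_0$, and no such line is produced -- that is essentially the original problem again (the degeneracy locus of the map of bundles $\alpha$ over $(\PP^2)^\ast$ has expected codimension $2$, but nothing in your argument rules out that it is all of $(\PP^2)^\ast$ for some special $D_0$).

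The proposed fallback, ``a cohomological argument analogous to the pencil argument in the proof of Theorem \ref{thm:main2},'' does not go through in genus two. In that proof the contradiction with $H^0(S,\kt_S)=0$ arises because the kernels along the pencil are \emph{trivialized}: the restriction maps to a fixed auxiliary curve $C'$ are injective by Remark \ref{rem:BM}(ii), which requires $g>2$, and this identification with the constant space $H^0(C',\kt_S|_{C'})$ is what produces a trivial subbundle of $p_\ast q^\ast\kt_S$ and hence a nonzero section of $q^\ast\kt_S$. For $g=2$ there is no such comparison curve; a nowhere-vanishing kernel of $\alpha$ only yields a rank-one subsheaf of $p_\ast q^\ast\kt_S$ on the dual plane, which may well be negative and carries no global sections, so no contradiction with $H^0(S,\kt_S)=0$ follows. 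Indeed the paper's proof of Theorem \ref{thm:main2} explicitly excludes $g=2$ and defers to this Proposition. The paper closes the gap by a different mechanism: using $\pi_\ast\Omega_S\cong\Omega_{\PP^2}\oplus\Omega_{\PP^2}({\rm log}\,D_0)(-3)$, proving stability of both rank-two summands (for the logarithmic one via the residue sequence), and invoking Grauert--M\"ulich to get the splitting type $\ko(-1)\oplus\ko(-2)$ on a generic line, whence $H^0(C,\kt_S|_C)=0$. Some input of this kind -- stability plus a restriction theorem, or an explicit syzygy analysis valid for all smooth sextics -- is what your argument is missing.
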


\begin{proof} According to \cite[Lem.\ 3.16(d)]{EV}, we have $\pi_\ast\Omega_S\cong\Omega_{\PP^2}\oplus \Omega_{\PP^2}({\rm log}D_0)(-3)$
and, therefore, for any line $\ell\subset\PP^2$
$$H^0(\pi^{-1}(\ell),\Omega_S|_{\pi^{-1}(\ell)})\cong H^0(\ell,(\Omega_{\PP^2}\oplus\Omega_{\PP^2}({\rm log}D_0)(-3))|_\ell).$$

Now, both bundles, $\Omega_{\PP^2}$ and $\Omega_{\PP^2}({\rm log}D_0)(-3)$, are 
stable of rank two with determinant $\ko(-3)$. 
Indeed, the stability of $\Omega_{\PP^2}$ is well known and for $\Omega_{\PP^2}({\rm log}D_0)$ it is deduced by a standard argument from the exact residue sequence $$\xymatrix@C=19pt{0\ar[r]&\Omega_{\PP^2}\ar[r]&\Omega_{\PP^2}({\rm log}D_0)\ar[r]&\ko_{D_0}\ar[r]&0.}$$

The Grauert--M\"ulich theorem,
cf.\ \cite[Thm.\ 3.0.1]{HL}, then proves that the restriction of both bundles,
$\Omega_{\PP^2}$ and $\Omega_{\PP^2}({\rm log}D_0)(-3)$, to the generic line $\ell\subset\PP^2$ is isomorphic to $\ko(-1)\oplus\ko(-2)$. Therefore,
$H^0(C,\kt_S|_C)\cong H^0(\pi^{-1}(\ell),\Omega_S|_{\pi^{-1}(\ell)})=0$ for the generic
curve $C=\pi^{-1}(\ell)\in |H|\cong|\ko(1)|$.
\end{proof}

\end{document}